\documentclass{amsart}[11pt]
\textwidth=13cm
\usepackage{amsmath, amsthm, amssymb, amscd, amsfonts,enumerate}
\usepackage[utf8]{inputenc}
\usepackage[mathscr]{eucal}
\usepackage{color}
\usepackage{float}
\usepackage{cite}
\usepackage[breaklinks]{hyperref}
\newtheorem{theorem}{THEOREM}[section]
\newtheorem{lemma}[theorem]{LEMMA}
\newtheorem{corollary}[theorem]{COROLLARY}

\usepackage{longtable}
\usepackage{threeparttablex}

\theoremstyle{definition}
\newtheorem{definition}{\emph{Definition}}
\newtheorem{exm}{\emph{Example}}

\theoremstyle{remark}

\newcommand{\integer}{\ensuremath{\mathbb{Z}}}
\newcommand{\rad}{\ensuremath{\textnormal{rad}}}
\newcommand{\nat}{\ensuremath{\mathbb{N}}}

\title{Coprimely Structured Modules}

\makeatletter
% *, 1, 2, ...
\renewcommand*{\@fnsymbol}[1]{\ifcase#1\or1\else\@arabic{\numexpr#1\relax}\fi}

\begin{document}

	\author{Zehra Bİlgİn}
	{\address{ \begin{flushright} \emph{ Department of Mathematics, Yıldız Technical University, 34210, Esenler,
		İstanbul, TURKEY}\\
\emph{ zbilgin@yildiz.edu.tr, zehrabilgin.zb@gmail.com}\end{flushright}}

\author{Kürşat Hakan Oral}
\address{  
	\begin{flushright} \emph{ 
				İstanbul, TURKEY}\\
			\emph{ khoral993@gmail.com}\end{flushright}}

	\date{\today}

	\keywords{Coprimely structured rings; multiplication modules; coprimely structured modules}
	\subjclass[2010]{13A15, 13C13, 13C99}

	\begin{abstract}
		Let $R$ be a commutative ring with identity. A prime submodule $P$ of an $R$-module $M$ is called coprimely structured if, whenever $P$ is coprime to each element of an arbitrary family of submodules of $M$, the intersection of the family is not contained in $P$. An $R$-module $M$ is called coprimely structured provided each prime submodule of $M$ is coprimely structured. In this paper, properties of coprimely structured modules are examined. Severals results for coprimely structured finitely generated modules and coprimely structured multiplication modules are obtained.
	\end{abstract}

	\maketitle
	
	\section{\bf{INTRODUCTION}}
	
		Throughout this paper, all rings are commutative and with identity. In \cite{AOT}, coprimely structured rings are studied. In this paper, we generalize this concept to the class of modules. Let $R$-be a ring. A prime submodule $Q$ of an $R$-module $M$ is called coprimely structured if for each family $\{N_i\}_{i\in I}$ of submodules of $R$ whenever $N_i+Q=M$ for each $i\in I$, we have $\bigcap_{i\in I}N_i\not\subseteq Q$. $M$ is called a copimely structured module if each of its prime submodules is coprimely structured. In Section 2, definitions and general results are given. Direct sums of modules are examined and a sufficient condition for a direct sum of coprimely structured modules being coprimely structured is stated [Theorem \ref{E:sum}].
	
	In Section 3, finitely generated modules are discussed. For a finitely generated module, it can be decided whether the module is coprimely structured or not by examining only maximal ideals instead of all prime ideals [Theorem \ref{E:max}]. The property of an $R$-module's being coprimely structured is transported to its localization in case of $R$ is local and the module is a finitely generated distributive $R$-module [Theorem \ref{E:local}].
	
	An $R$-module $M$ is called a multiplication module if each submodule $N$ of $M$ is of the form $IM$ for some ideal $I$ of $R$. Multiplication modules are studied widely in the literature, see \cite{BS, B}. In the category of multiplication modules, many properties of coprimely structured modules can be characterized. The radical of an ideal $I$ of $R$ is defined as the intersection of all prime ideals that contain $I$. Similarly, the radical of a submodule $N$, $\rad (N)$, of a module $M$ is the intersection of all prime submodules of $M$ that contains $N$. When $R$ is viewed as a module over itself, the definitions of the radical of an ideal and the radical of a submodule coincide. In \cite{RA}, the product of two submodules $N=IM$ and $K=JM$ of a multiplication $R$-module $M$ is defined as $(IJ)M$. Accordingly, the product of two elements $m,m'\in{M}$ is defined as the product of the submodules $Rm$ and $Rm'$. Using this definition, it is proved in \cite{RA} that $$\rad(N)=\{m\in{M} : m^k\subseteq{N} \textnormal{ for some } k\ge0\}$$ for a submodule $N$ of a multiplication $R$-module $M$. Section 4 is reserved for multiplication modules. A family $\{N_i\}_{i\in{I}}$ of submodules of a multiplication  $R$-module $M$ is said to satisfy property (*) if for each $x\in{M}$, there is an $n\in\nat$ such that $x\in\rad{(N_i)}$ implies $x^n\subseteq{N_i}$. With the aid of the property (*), it is possible to characterize coprimely structured multiplication modules in terms of prime submodules and maximal submodules [Theorem \ref{E:maximal}]. Besides, the property (*) is proved to be useful to give a sufficient condition for a module to be coprimely structured provided a particular quotient of the module is coprimely structured [Theorem \ref{E:zerorad}]. In case of $M$ is a finitely generated faithful multiplication $R$-module, $M$ is coprimely structured if and only if $R$ is coprimely structured [Theorem \ref{E:ringmod}].
	
	In Section 5, the relation between the property (*) and finitely generated multiplication modules is examined. In particular, if $R$ is a principal ideal ring and $M$ is a finitely generated faithful multiplication module, this property can be used to obtain information about $M$'s being zero-dimensional [Theorem \ref{E:zero-dim}].
	
	\section{\bfseries{COPRIMELY STRUCTURED MODULES}}
	In this section, we define coprimely structured modules and investigate some basic properties of them. Also we consider the relation between coprimely structured modules and strongly 0-dimensional modules.
	\begin{definition}
		Let $M$ be an $R$-module and $\{N_i\}_{i\in{I}}$ a family of submodules of $M$. A prime submodule $P$ of $M$ is said to be a coprimely structured submodule of $M$ if $N_i+P=M$, for all $i\in{I}$, implies $\bigcap_{i\in{I}}N_i\nsubseteq{P}$. An $R$-module $M$ is called a coprimely structured module if every prime submodule of $M$ is coprimely structured.
	\end{definition}
	Note that not every module is a coprimely structured module. Here is an example:
	\begin{exm}
		Let $R=\integer$, $M=\integer \times \integer$. Conside the family of submodules $\{N_{n}={n\integer \times n\integer} : n\in{\nat}, n \text{ odd } \} $ of $M$. We have
		$\bigcap_{n\in{\nat} \atop n \textnormal{odd}}N_{n} = (0)$. Observe that ${2\integer \times 2\integer}$ is a prime submodule of $M$ and $\bigcap_{n\in{\nat} \atop n \textnormal{odd}}N_{n}\subset{2\integer \times 2\integer}$. However, for each $n\in{\nat}$, $n$ odd, we have $N_n+2\integer \times 2\integer=M$. Thus $M$ is not coprimely structured.
	\end{exm}
	\begin{theorem}\label{E:homimage}
		Every homomorphic image of a coprimely structured module is coprimely structured.
	\end{theorem}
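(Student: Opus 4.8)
The plan is to argue by pulling everything back along the surjection. Let $M$ be a coprimely structured $R$-module and $f\colon M\to M'$ a surjective $R$-homomorphism; I want to show that every prime submodule $P'$ of $M'$ is coprimely structured. So fix such a $P'$ together with a family $\{N_i'\}_{i\in I}$ of submodules of $M'$ satisfying $N_i'+P'=M'$ for every $i\in I$, and aim to prove $\bigcap_{i\in I}N_i'\nsubseteq P'$.

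First I would set $P:=f^{-1}(P')$ and $N_i:=f^{-1}(N_i')$; these are submodules of $M$, all containing $\ker f$. The initial step is to check that $P$ is again a prime submodule of $M$: it is proper since $f$ is onto and $P'\neq M'$, and the prime condition transfers because $f(rm)=rf(m)$ and $f(rM)=rM'$ for $r\in R$. Next, using that $N_i$ and $P$ both contain $\ker f$, I would verify the identity $N_i+P=f^{-1}(N_i'+P')$ — the nontrivial inclusion lifts a preimage using surjectivity — so that $N_i'+P'=M'$ forces $N_i+P=f^{-1}(M')=M$ for every $i$.

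Now I apply the hypothesis that $M$ is coprimely structured to the prime submodule $P$ and the family $\{N_i\}_{i\in I}$: this yields an element $m\in\bigcap_{i\in I}N_i$ with $m\notin P$. Then $f(m)\notin P'$ by the definition of $P$, while $f(m)\in f\bigl(f^{-1}(N_i')\bigr)=N_i'$ for each $i$, again by surjectivity; hence $f(m)\in\bigl(\bigcap_{i\in I}N_i'\bigr)\setminus P'$. This shows $P'$ is coprimely structured, and since $P'$ was arbitrary, $M'$ is coprimely structured, which is what we want once we observe that any homomorphic image of $M$ is isomorphic to such an $M'$.

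The only delicate points are the two standard facts used above: that the preimage of a prime submodule under a surjection is prime, and that $f^{-1}$ commutes with the relevant finite sums once all the submodules in sight contain $\ker f$. Neither is hard, but both genuinely use surjectivity of $f$, so I would record them explicitly rather than pass over them.
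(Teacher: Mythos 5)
Your proof is correct and is essentially the paper's argument: both pull the data back along the surjection via the correspondence theorem (your $P=f^{-1}(P')$, $N_i=f^{-1}(N_i')$ are exactly the paper's lifts containing $\ker f$), the only cosmetic difference being that you argue the implication directly while the paper argues the contrapositive. Your explicit checks that $f^{-1}(P')$ is prime and that $f^{-1}$ respects the sum are welcome details the paper leaves implicit.
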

	\begin{proof}Let $M$ be a coprimely structured $R$-module, and $M'$ an $R$-module. Let $f:M\rightarrow{M'}$ be an $R$-module homomorphism. Assume that for a family of submodules $\{N_i'\}_{i\in{I}}$ and a prime submodule $P'$ of $f(M)$, the inclusion $\bigcap_{i\in{I}}N_i'\subseteq{P'}$ holds. Then there exist a family of submodules $\{N_i\}_{i\in{I}}$ of $M$ and a prime submodule $P$ of $M$ such that each $N_i$ and $P$ contain $\textnormal{ker}f$, and for all $i\in{I}$, the equalities $f(N_i)=N_i'$ and $f(P)=P'$ hold.
		Then
		$$f(\bigcap_{i\in{I}}N_i)\subseteq\bigcap_{i\in{I}}f(N_i)=\bigcap_{i\in{I}}N_i'\subseteq{P'}=f(P)$$
		and hence $\bigcap_{i\in{I}}N_i\subseteq{P}$. As $M$ is coprimely structured, there exists $j\in{I}$ such that $N_j+P\neq{M}$. Since both $N_j$ and $P$ contain $\textnormal{ker}f$, we conclude that $N_j'+P'=f(N_j+P)\neq{f(M)}$. Thus $f(M)$ is coprimely structured.
	\end{proof}
	\begin{corollary}\label{E:quotient}
		Let $M$ be an $R$-module and $N$ a submodule of $M$. If $M$ is coprimely structured, so is $M/N$.
	\end{corollary}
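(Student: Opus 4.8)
The plan is to observe that $M/N$ is nothing but a homomorphic image of $M$, so that Corollary \ref{E:quotient} should fall out of Theorem \ref{E:homimage} with essentially no extra work. Concretely, I would introduce the canonical projection $\pi\colon M \to M/N$ defined by $\pi(m) = m + N$. This is an $R$-module homomorphism, and it is surjective, so $\pi(M) = M/N$.

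Next I would simply invoke Theorem \ref{E:homimage}: since $M$ is a coprimely structured $R$-module and $\pi$ is an $R$-module homomorphism, its image $\pi(M)$ is coprimely structured. But $\pi(M) = M/N$, so $M/N$ is coprimely structured, which is exactly the claim.

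I do not anticipate any real obstacle here; the only thing worth a sentence is to make explicit that the natural projection is an $R$-homomorphism and is onto, so that "homomorphic image" in the statement of Theorem \ref{E:homimage} literally applies to $M/N$. One could, alternatively, unwind the argument of Theorem \ref{E:homimage} in the special case $f = \pi$ (where $\ker f = N$, submodules of $M/N$ correspond to submodules of $M$ containing $N$, and prime submodules correspond to prime submodules containing $N$), but that would just be repeating the proof of the theorem, so the clean route is to cite it directly. Hence the proof is a one-line deduction.
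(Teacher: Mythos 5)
Your proof is correct and is exactly the intended argument: the corollary follows from Theorem \ref{E:homimage} applied to the canonical surjection $\pi\colon M\to M/N$, which is how the paper derives it (it states the corollary without further proof). Nothing is missing.
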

	A prime submodule $P$ of an $R$-module $M$ is called a strongly prime submodule provided for any family $\{N_i\}_{i\in{I}}$ of submodules of $M$, the inclusion $\bigcap_{i\in{I}}N_i\subseteq{P}$ implies $N_j\subseteq{P}$ for some $j\in{I}$. An $R$-module $M$ is a strongly $0$-dimensional module if each of its prime submodules is strongly prime. Strongly $0$-dimensional multiplication modules are introduced and studied in \cite{OAT}. The following theorem states the relation between coprimely structured modules and strongly $0$-dimensional modules.
	\begin{theorem}\label{E:0-dim}
		Every strongly $0$-dimensional $R$-module is coprimely structured.
	\end{theorem}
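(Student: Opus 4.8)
The plan is to argue directly from the two definitions, with no machinery needed. Let $M$ be a strongly $0$-dimensional $R$-module and let $P$ be an arbitrary prime submodule of $M$; by hypothesis $P$ is then strongly prime. To see that $P$ is coprimely structured, I would start from an arbitrary family $\{N_i\}_{i\in{I}}$ of submodules of $M$ satisfying $N_i+P=M$ for every $i\in{I}$, and the goal is to conclude $\bigcap_{i\in{I}}N_i\nsubseteq{P}$.

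I would do this by contradiction. Suppose instead that $\bigcap_{i\in{I}}N_i\subseteq{P}$. Since $P$ is strongly prime, this inclusion forces $N_j\subseteq{P}$ for some $j\in{I}$. But $N_j\subseteq{P}$ gives $N_j+P=P$, and since $P$ is prime it is in particular a proper submodule of $M$, so $N_j+P=P\neq{M}$. This contradicts the assumption that $N_i+P=M$ for all $i\in{I}$. Hence $\bigcap_{i\in{I}}N_i\nsubseteq{P}$, and $P$ is a coprimely structured submodule.

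As $P$ was an arbitrary prime submodule of $M$, every prime submodule of $M$ is coprimely structured, which is precisely the assertion that $M$ is a coprimely structured module. There is no genuine obstacle here; the only point worth flagging is the convention that prime submodules are proper, since that is exactly what makes $N_j\subseteq{P}$ incompatible with $N_j+P=M$. The statement is essentially a direct unwinding of the definitions of strongly prime and coprimely structured submodules.
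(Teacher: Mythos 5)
Your argument is correct and is essentially identical to the paper's proof: both proceed by contradiction, assume $\bigcap_{i\in I}N_i\subseteq P$, invoke the strongly prime property to get $N_j\subseteq P$, and derive $N_j+P=P\neq M$, contradicting coprimality. Nothing further is needed.
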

	\begin{proof}
		Let $M$ be a strongly $0$-dimensional $R$-module, $\{N_i\}_{i\in{I}}$ a family of submodules of $M$ and $P$ a prime submodule of $M$. Suppose that the equation $N_i+P=M$ is satisfied for all $i\in{I}$. Assume that $\bigcap_{i\in{I}}N_i\subseteq{P}$. Since $M$ is strongly $0$-dimensional, there exists $j\in{I}$ such that $N_j\subseteq{P}$. Then we have $M=N_j+P=P$ which is a contradiction. Therefore $\bigcap_{i\in{I}}N_i\not\subseteq{P}$.
	\end{proof}
	By Theorem \ref{E:homimage}, if a direct sum is coprimely structured so is each of its direct summands. We are to investigate when the converse is true. In \cite[2.1]{VE}, Erdoğdu characterizes the submodule structure of direct sum of two modules. For two $R$-modules $M$ and $N$, if $\text{Ann}(x)+\text{Ann}(y)=R$ for each $x\in M$ and $y\in N$, then every submodule of $M\oplus N$ is of the form $A\oplus B$ for some submodule $A$ of $M$ and some submodule $B$ of $N$. We generalize this result to an arbitrary direct sum.
	\begin{lemma}\label{E:ann}
		Let $M_i$, $i\in \ I$, be $R$-modules. The following are equivalent:
		\begin{enumerate}[(i)]
			\item $\text{Ann}(m_i)+\text{Ann}(m_j)=R$ for each  $m_i\in M_i$ and $i\neq j$.
			\item Each submodule of $\bigoplus_{i\in I}M_i$ is of the form $\bigoplus_{i\in I}N_i$, where $N_i$ is a submodule of $M_i$ for each $i\in I$.
		\end{enumerate}
	\end{lemma}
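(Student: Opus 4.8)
The proof will establish the two implications separately. For $(ii) \Rightarrow (i)$, I would proceed by contraposition: suppose $\text{Ann}(m_i) + \text{Ann}(m_j) \neq R$ for some $m_i \in M_i$, $m_j \in M_j$ with $i \neq j$. Then this sum is contained in a maximal ideal $\mathfrak{m}$. The idea is to build a submodule of $\bigoplus_{k} M_k$ that is not a "box" submodule by mixing the $i$-th and $j$-th coordinates. Concretely, I would consider the cyclic submodule $R(m_i + m_j)$ (viewing $m_i + m_j$ as an element supported on coordinates $i$ and $j$). If this were of the form $\bigoplus_k N_k$, then in particular $m_i = $ (its $i$-th coordinate) would lie in $N_i \subseteq R(m_i+m_j)$, forcing $m_i = r(m_i+m_j)$ for some $r \in R$, hence $rm_j = 0$ and $(1-r)m_i = 0$; that is, $r \in \text{Ann}(m_j)$ and $1 - r \in \text{Ann}(m_i)$, so $1 = (1-r) + r \in \text{Ann}(m_i) + \text{Ann}(m_j)$, contradicting containment in $\mathfrak{m}$.

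For $(i) \Rightarrow (ii)$, let $N$ be an arbitrary submodule of $\bigoplus_{i \in I} M_i$, and for each $i$ let $N_i = \pi_i(N)$ be the image under the canonical projection (equivalently $N_i = N \cap M_i$ — I would want to verify these coincide). Clearly $N \subseteq \bigoplus_i N_i$, so the content is the reverse inclusion: given any element $x = (x_i)_{i \in I} \in \bigoplus_i N_i$ with finite support $\{i_1, \dots, i_n\}$, I must show $x \in N$. The strategy is to show each "pure tensor" $x_{i_k}$ (as an element of $M_{i_k}$ embedded in the direct sum) already lies in $N$, and then sum. To see $x_{i} \in N$ when $x_i \in N_i = \pi_i(N)$: pick $y \in N$ with $\pi_i(y) = x_i$, where $y$ has finite support $S$. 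Using the pairwise comaximality of annihilators together with the Chinese Remainder Theorem, for each $j \in S \setminus \{i\}$ we have $\text{Ann}(y_j) + \text{Ann}(x_i) = R$, and since the $\text{Ann}(y_j)$ over $j \in S \setminus \{i\}$ are pairwise comaximal as well, I can find $r \in R$ with $r \equiv 1 \pmod{\text{Ann}(x_i)}$ — wait, more precisely I want $r$ with $r y_j = 0$ for all $j \in S\setminus\{i\}$ and $r y_i = y_i = x_i$; such $r$ exists because $\text{Ann}(x_i) + \bigcap_{j \in S\setminus\{i\}} \text{Ann}(y_j) = R$ (comaximality of $\text{Ann}(x_i)$ with each factor, hence with their intersection). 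Then $ry = x_i$ (as elements of the direct sum), so $x_i \in N$.

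The main obstacle I anticipate is the bookkeeping in the $(i) \Rightarrow (ii)$ direction: carefully reducing an infinite-support ambient module to the finite support of each chosen element, and correctly invoking comaximality of an ideal with a finite intersection of pairwise-comaximal ideals (a standard CRT-type fact, but it must be stated and applied cleanly, e.g. via $I + J_1 = R$ and $I + J_2 = R$ giving $I + J_1 J_2 = R$, then induction). Once the single-coordinate case $x_i \in N$ is secured, finishing is immediate by writing $x = \sum_{k=1}^n x_{i_k}$ and using that $N$ is closed under addition. I should also double-check at the outset that $\pi_i(N) = N \cap M_i$ under hypothesis $(i)$ — this follows from the same argument that produces, for $y \in N$ with $\pi_i(y) = x_i$, an element $ry = x_i \in N \cap M_i$, so the two descriptions of $N_i$ agree and no ambiguity remains.
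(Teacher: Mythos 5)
Your proposal is correct and follows essentially the same route as the paper: the $(ii)\Rightarrow(i)$ direction uses the cyclic submodule generated by the element supported on coordinates $i$ and $j$ to extract $r\in\text{Ann}(m_j)$ with $1-r\in\text{Ann}(m_i)$, and the $(i)\Rightarrow(ii)$ direction rests on the same CRT-type fact (comaximality of $\text{Ann}(x_i)$ with the intersection of the other coordinates' annihilators, which is exactly the paper's inductive claim $\text{Ann}((m_{i_1},\dots,m_{i_{n-1}}))+\text{Ann}(m_{i_n})=R$) to show $N=\bigoplus_i\bigl(N\cap\iota_i(M_i)\bigr)$. The only difference is presentational: you isolate one coordinate at a time, while the paper expands $\prod_l(a_l+b_l)$ to split all coordinates of an element simultaneously.
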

	
	\begin{proof}
		
		(i)$\Rightarrow$(ii) Suppose that $\text{Ann}(m_i)+\text{Ann}(m_j)=R$ for each  $m_i\in M_i$ and $i\neq j$. We first prove that $\text{Ann}((m_{i_1},...,m_{i_{n-1}}))+\text{Ann}(m_{i_n})=R$ for each $n\in\nat$, $m_{i_j}\in M_{i_j}$ and $1\leq j\leq n$. Let $n\in\nat$. We prove the statement by induction on $n$. Let $m_i\in M_i$. For $n=2$ the result follows from the assumption. Let $n=3$. By assumption, we have $1=a+b=c+d$ where $a\in \text{Ann}(m_1)$, $c\in \text{Ann}(m_2)$ and $b,d\in \text{Ann}(m_3)$ . Then
		$$1=(a+b)(c+d)=ac+ad+bc+bd.$$
		Since $ac\in \text{Ann}((m_1,m_2))$ and $ad+bc+bd\in \text{Ann}(m_3)$ we have $$\text{Ann}((m_1,m_2))+\text{Ann}(m_3)=R.$$
		Let $n=k$. Assume that $\text{Ann}((m_{i_1},...,m_{i_{l-1}}))+\text{Ann}(m_{i_l})=R$ for $2\leq l\leq k-1$, $i_j\in\{1,...,k\}$. Then,
		\begin{align}\text{Ann}((m_{i_1},...,m_{i_{k-2}},&m_{i_{k-1}}))+\text{Ann}(m_{i_k})\notag\\
		&=\text{Ann}(((m_{i_1},...,m_{i_{k-2}}),m_{i_{k-1}}))+\text{Ann}(m_{i_k})\notag\\
		&=R\notag
		\end{align}
		since, by assumption, we have $$\text{Ann}((m_{i_1},...,m_{i_{k-2}}))+\text{Ann}(m_{i_k})=R$$ and $\text{Ann}(m_{i_{k-1}})+\text{Ann}(m_{i_k})=R.$
		
		Now, let $N$ be a submodule of $M=\bigoplus_{i\in I}M_i$. Let $n\in N$. Then $n=\sum_{i\in I}m_i$, where $m_{i_1},...,m_{i_k}$ are nonzero and $m_i=0$ for $i\neq i_1,...,i_k$. For each $l\in\{1,...,k\}$, $$1=a_l+b_l$$ where $a_l\in\text{Ann}(((m_{i_1},...,m_{i_{l-1}},m_{i_{l+1}},...,m_{i_{k}})))$ and $b_l\in\text{Ann}(m_{i_l})$. Then
		\begin{align}1=\prod_{l=1}^k(a_l+b_l)=(a_1b_2...b_k&+a_2b_1b_3...b_k+...+a_kb_1...b_{k-1})\notag\\
		&+b_1...b_k+\left(\sum_{2\leq r} a_{t_1}...a_{t_r}b_{s_1}...b_{s_p}\right).\notag
		\end{align}
		Observe that the terms in the second line of the right hand side are contained in $\text{Ann}(m_{i_l})$ for each $l\in\{1,...,k\}$. On the other hand, for each $j\in\{1,...,k\}$, we have
		$$(a_jb_1...b_{j-1}b_{j+1}...b_k)n=(a_jb_1...b_{j-1}b_{j+1}...b_k)\iota_{i_j}(m_{i_j})\in N\cap\iota_{i_j}(M_{i_j})$$
		
		where $\iota_i:M_i\rightarrow\bigoplus_{i\in I}M_i$ is the $i$th natural injection. Hence
		$$n=1.n=\left[\prod_{l=1}^k(a_l+b_l)\right]n=\sum_{i\in I}c_i$$
		where $c_{i_1}=(a_1b_2...b_k)m_{i_1}$, $c_{i_2}=(a_2b_1b_3...b_k)m_{i_2}$,..., $c_{i_k}=(a_kb_1...b_{k-1})m_{i_k}$ and $c_i=0$ for $i\in I-\{i_1,...,i_k\}$.
		Then $n\in\bigoplus_{i\in I}(N\cap \iota_i(M_i))\subseteq N$.
		Therefore $N=\bigoplus_{i\in I}(N\cap \iota_i(M_i))$.
		
		(ii)$\Rightarrow$(i)Assume that $N$ is a submodule of $M=\bigoplus_{i\in I}M_i$. Then, by assumption, $N=\bigoplus_{i\in I}N_i$ for some submodule $N_i$ of $M_i$ for each $i\in I$. Observe that
		$$N\cap\iota_i(M_i)=\left(\bigoplus_{i\in I}N_i\right)\cap \iota_i(M_i)=\iota_i(N_i).$$
		Hence $N=\bigoplus_{i\in I}N_i=\bigoplus_{i\in I}(N\cap\iota_i(M_i))$.
		Let $i,j\in I$ and $m_i\in M_i$, $m_j\in M_j$. Set $a=\sum_{k\in I}a_k\in M$ where $a_i=m_i$, $a_j=m_j$ and $a_k=0$ for $k\neq i,j$. Since $Ra$ is a submodule of $M$, by the above argument $Ra=\bigoplus_{i\in I}(Ra\cap\iota_i(M_i))$. Then $a=\sum_{k\in I}n_k$ where $n_k\in Ra\cap\iota_k(M_k)$. Set $b=\sum_{k\in I}b_k\in M$ where $b_i=m_i$, $b_j=n_j$ and $b_k=0$ for $k\neq i,j$. Then $a-b=\sum_{k\in I}a_k-\sum_{k\in I}b_k=\sum_{k\in I}n_k-\sum_{k\in I}b_k$. Comparing the corresponding indices, we obtain $n_i-m_i=0$ and $m_j-n_j=0$. Hence $m_i=n_i$ and $m_j=n_j$. For each $k\in I$, since $\iota_k(n_k)\in Ra\cap\iota_k(M_k)$, there exists $r_k\in R$ such that $r_ka=\iota_k(n_k)\in \iota_k(M_k)$. In particular, $\iota_j(n_j)=r_ja.$ Using the equailities $n_i=m_i$ and $n_j=m_j$, we get $r_jm_i=0$ and $m_j=r_jm_j$. Then $r_j\in \text{Ann}(m_i)$ and $1-r_j\in\text{Ann}(m_j)$. Therefore
		$$1=r_j+(1-r_j)\in\text{Ann}(m_i)+\text{Ann}(m_j)$$ and hence $\text{Ann}(m_i)+\text{Ann}(m_j)=R$.
	\end{proof}
	\begin{theorem}\label{E:sum}
		Let $M_i$, $i\in I$, be coprimely structured $R$-modules and assume that $\text{Ann}(m_i)+\text{Ann}(m_j)=R$ for each,  $m_i\in M_i$, $i,j\in I$, $i\neq j$. Then $M=\bigoplus_{i\in I}M_i$ is coprimely structured.
	\end{theorem}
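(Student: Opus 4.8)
The plan is to reduce everything to the coordinate modules $M_i$ by means of Lemma \ref{E:ann}, whose hypothesis is exactly the standing assumption of this theorem. Thus every submodule of $M$ --- in particular every prime submodule and every member of any prescribed family --- splits as a direct sum of submodules of the $M_i$. The crucial structural observation will be that a prime submodule of $M$ is supported in a single coordinate.

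First I would fix a prime submodule $P$ of $M$ and, by Lemma \ref{E:ann}, write $P=\bigoplus_{i\in I}P_i$ with each $P_i\le M_i$. I claim there is exactly one index $k$ with $P_k\ne M_k$, and that this $P_k$ is a prime submodule of $M_k$. For uniqueness, suppose $P_i\ne M_i$ and $P_j\ne M_j$ for some $i\ne j$; choose $m_i\in M_i\setminus P_i$ and $m_j\in M_j\setminus P_j$ and write $1=a+b$ with $a\in\text{Ann}(m_i)$ and $b\in\text{Ann}(m_j)$. Then $a\,\iota_i(m_i)=0\in P$ while $\iota_i(m_i)\notin P$, so primeness of $P$ forces $aM\subseteq P$; applying this to $\iota_j(m_j)$ gives $am_j\in P_j$, but $am_j=(1-b)m_j=m_j\notin P_j$, a contradiction. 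Since $P\ne M$ guarantees at least one proper component, exactly one index $k$ works. That $P_k$ is prime in $M_k$ follows from $M/P\cong M_k/P_k$ (the remaining summands $M_i/P_i$ vanish), because being a prime submodule is a property of the quotient module alone; alternatively one checks the defining implication directly by transporting elements of $M_k$ into $M$ via $\iota_k$.

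Next, let $\{N^{(\lambda)}\}_{\lambda\in\Lambda}$ be a family of submodules of $M$ with $N^{(\lambda)}+P=M$ for all $\lambda$, and write $N^{(\lambda)}=\bigoplus_{i\in I}N^{(\lambda)}_i$ using Lemma \ref{E:ann} again. Comparing coordinates, $N^{(\lambda)}+P=M$ is equivalent to $N^{(\lambda)}_i+P_i=M_i$ for every $i$; this is automatic for $i\ne k$ and reads $N^{(\lambda)}_k+P_k=M_k$ for $i=k$. Since $M_k$ is coprimely structured and $P_k$ is a prime submodule of it, we conclude $\bigcap_{\lambda}N^{(\lambda)}_k\not\subseteq P_k$. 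Because intersections distribute over direct sums, $\bigcap_{\lambda}N^{(\lambda)}=\bigoplus_{i\in I}\bigcap_{\lambda}N^{(\lambda)}_i$, and its $k$-th component is not contained in $P_k$; choosing $y$ in it but not in $P_k$, the element $\iota_k(y)$ lies in $\bigcap_{\lambda}N^{(\lambda)}$ but not in $P$. Hence $P$ is coprimely structured, and as $P$ was an arbitrary prime submodule, $M$ is coprimely structured.

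The main obstacle is establishing the single-coordinate form of a prime submodule of $M$; once that is in hand the argument is just bookkeeping with direct sums together with the coprimely structured hypothesis on $M_k$. It is worth pausing over the degenerate cases as well --- an empty index set $I$, or an empty family $\{N^{(\lambda)}\}$ --- but both are harmless, since prime submodules are by definition proper, so $M\not\subseteq P$ gives the required non-inclusion immediately.
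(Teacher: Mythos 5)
Your proof is correct and follows essentially the same route as the paper: decompose all submodules via Lemma \ref{E:ann}, show the prime $P$ is proper in exactly one coordinate $k$ with $P_k$ prime in $M_k$, and invoke the coprimely structured hypothesis on $M_k$. The only difference is cosmetic --- you argue with the coprimality formulation rather than its contrapositive, and you actually supply the argument for the uniqueness of the index $k$ (via $1=a+b$ with $a\in\text{Ann}(m_i)$, $b\in\text{Ann}(m_j)$), a point the paper merely asserts.
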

	\begin{proof} Let $N_\lambda$, $\lambda\in \Lambda$, be a family of submodules and $P$ a prime submodule of $M$ such that $\bigcap_{\lambda\in\Lambda}N_\lambda\subseteq P$.  By Lemma \ref{E:ann}, each submodule of $M$ is of the form $\bigoplus_{i\in I}N_i$ where $N_i$ is a submodule of $M_i$ for each $i \in I$. Then for each $\lambda\in\Lambda$, for each $i\in I$, there exists $N_{i,\lambda}$, submodule of $M_i$ such that $N_\lambda=\bigoplus_{i\in I}N_{i,\lambda}$ and there exists $P_i$, submodule of $M_i$, such that $P=\bigoplus_{i\in I}P_i$.
		Since $P$ is prime, there exists a unique $k\in I$ such that $P_k\neq M_k$. Let $r\in R$, $m\in M_k$. Assume that $rm\in P_k$ and $m\not\in P_k$. Set $a=\sum_{i\in I}a_i$ where $a_k=m$ and $a_i=0$ for $i\neq k$. Then $ra\in P$ and $a\not \in P$. Since $P$ is prime, $r\in (P:M)$. Then $rM_k\subseteq P_k$ and hence we conclude that $r\in (P_k:M_k)$. Therefore $P_k$ is a prime submodule of $M_k$.
		We have
		$$\bigoplus_{i\in I}\left(\bigcap_{\lambda\in\Lambda}N_{i,\lambda}\right)\subseteq \bigcap_{\lambda\in\Lambda}\left(\bigoplus_{i\in I}N_{i,\lambda}\right)=\bigcap_{\lambda\in\Lambda}N_\lambda\subseteq P=\bigoplus_{i\in I}P_i~~.$$
		Then, we have $\bigcap_{\lambda\in\Lambda}N_{k,\lambda}\subseteq P_k$. Since $M_k$ is coprimely structured, there exists $\gamma\in \Lambda$ such that $N_{k,\gamma}+P_k\neq M_k$. Therefore
		
		$$	N_\gamma+P=(N_{k,\gamma}+P_k)\oplus\left(\bigoplus_{i\in I \atop i\neq k}N_{i,\gamma}\right)+\left(\bigoplus_{i\in I \atop i\neq k}P_i\right)
		\neq M_k+\left(\bigoplus_{i\in I \atop i\neq k}M_i\right)=M~~.$$
		
		Thus, $M$ is coprimely structured.
	\end{proof}
	\begin{exm}
		Let $R=\mathbb{Z}$ and $M=\bigoplus_{p \text{ prime}}\mathbb{Z}_p$. Then $M$ is an $R$-module. For each prime number $p$, $\mathbb{Z}_p$, being finite, is coprimely structured. Let $p$ and $q$ be two different prime numbers. Since $p$ and $q$ are coprime, there exists $x$ and $y$ in $\mathbb{Z}$ such that $px+qy=1$. For each $m_p\in \mathbb{Z}_p$ and $m_q\in \mathbb{Z}_q$, since $px\in\text{Ann}(m_p)$ and $qy\in\text{Ann}(m_q)$ we have $1\in \text{Ann}(m_p)+\text{Ann}(m_q)$. Therefore, by Theorem \ref{E:sum}, $M$ is coprimely structured.
	\end{exm}
	\section{\bfseries{COPRIMELY STRUCTURED PROPERTY ON FINITELY GENERATED MODULES}}
	It is known that every proper submodule of a finitely generated $R$-module is contained in a maximal submodule, \cite[2.8]{FA}. Provided we work on the class of finitely generated modules, it is enough to consider maximal submodules to decide whether a module is coprimely structured, or not. The following theorem states this result.
	\begin{theorem}\label{E:max}
		Let $M$ be a finitely generated $R$-module. If every maximal submodule of $M$ is coprimely structured, then $M$ is coprimely structured.
	\end{theorem}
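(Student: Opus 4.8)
The plan is to reduce the prime case to the maximal case by sandwiching an arbitrary prime submodule inside a maximal one, and observing that the defining coprimeness hypothesis only becomes easier to satisfy as the submodule on the right grows. Concretely, let $P$ be a prime submodule of $M$ and let $\{N_i\}_{i\in I}$ be a family of submodules of $M$ with $N_i+P=M$ for every $i\in I$; the goal is $\bigcap_{i\in I}N_i\nsubseteq P$.

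First I would note that, by definition, a prime submodule is proper, so since $M$ is finitely generated, \cite[2.8]{FA} gives a maximal submodule $Q$ of $M$ with $P\subseteq Q$. Next I would record the (standard) fact that every maximal submodule is prime: if $rm\in Q$ and $m\notin Q$, then maximality forces $Q+Rm=M$, so writing an arbitrary $x\in M$ as $x=q+sm$ with $q\in Q$, $s\in R$ yields $rx=rq+s(rm)\in Q$; hence $rM\subseteq Q$, i.e.\ $r\in(Q:M)$, and $Q$ is prime. This is what lets us invoke the coprimely structured hypothesis at $Q$. Then, for each $i\in I$, from $M=N_i+P\subseteq N_i+Q\subseteq M$ we get $N_i+Q=M$. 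Applying the assumption that $Q$ is coprimely structured produces $\bigcap_{i\in I}N_i\nsubseteq Q$, and since $P\subseteq Q$ this immediately gives $\bigcap_{i\in I}N_i\nsubseteq P$, as desired. As $P$ was an arbitrary prime submodule, $M$ is coprimely structured.

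There is no serious obstacle in this direction: the content is entirely in the existence of a maximal submodule above $P$ (which is exactly where finite generation of $M$ enters, via \cite[2.8]{FA}) together with the elementary observation that maximal submodules are prime. The only point requiring minor care is the boundary case $I=\varnothing$, where $\bigcap_{i\in I}N_i=M$ and the conclusion holds because $P$ is proper; and the trivial case $M=0$, which has no prime submodules and is vacuously coprimely structured.
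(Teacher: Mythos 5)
Your proposal is correct and follows essentially the same route as the paper: embed the prime submodule $P$ in a maximal submodule $Q$ via finite generation, apply the coprimely structured hypothesis at $Q$, and transfer the conclusion back to $P$ (you argue the direct implication, the paper argues its contrapositive, but these are the same argument). Your additional remarks — that maximal submodules are prime, and the treatment of the cases $I=\varnothing$ and $M=0$ — are correct details the paper leaves implicit.
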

	\begin{proof}
		Assume that every maximal submodule of $M$ is coprimely structured.
		Let $\{N_i\}_{i\in{I}}$ be a family of submodules of $M$ and $P$ a prime submodule of $M$ satisfying $\bigcap_{i\in{I}}N_i\subseteq{P}$. Since $M$ is finitely generated, the submodule $P$ is contained in a maximal submodule $K$ of $M$. Then $\bigcap_{i\in{I}}N_i\subseteq{K}$, and since $K$ is coprimely structured, there exists $j\in{I}$ such that $N_j+K\neq{M}$. Then $N_j+P\neq{M}$. Thus, we conclude that $M$ is coprimely structured.
	\end{proof}
	\begin{lemma}\label{E:maxstrong}
		Let $M$ be a finitely generated $R$-module. The following are equivalent:
		\begin{enumerate}[(i)]
			\item $M$ is coprimely structured.
			\item Every maximal submodule $K$ of $M$ is strongly prime.
			\item  For any maximal submodule $K$ and any family $\{N_i\}_{i\in{I}}$ of submodules of $M$, $K+N_i=M$, for all $i\in{I}$, implies  $K+\bigcap_{i\in{I}}N_i=M$.
		\end{enumerate}
	\end{lemma}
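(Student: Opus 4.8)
The plan is to establish the cycle of implications $(i)\Rightarrow(iii)\Rightarrow(ii)\Rightarrow(i)$. Before starting I would record the preliminary fact that every maximal submodule $K$ of $M$ is prime: the module $M/K$ is simple, so if $rm\in K$ with $m\notin K$ then $R\bar m=M/K$ in $M/K$, and $r\bar m=0$ forces $r(M/K)=R(r\bar m)=0$, i.e. $rM\subseteq K$. This observation is needed so that the notion of ``strongly prime'' is meaningful for maximal submodules and so that Theorem \ref{E:max} applies; it also underlies the mechanism common to all three implications, namely that a submodule containing a maximal submodule $K$ is either equal to $K$ or equal to $M$, which lets me freely convert between ``$K+N\neq M$'' and ``$N\subseteq K$''.

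For $(i)\Rightarrow(iii)$ I would fix a maximal submodule $K$ and a family $\{N_i\}_{i\in I}$ with $K+N_i=M$ for all $i$, and argue by contradiction: if $K+\bigcap_{i\in I}N_i\neq M$, then since this submodule contains the maximal submodule $K$ it must equal $K$, so $\bigcap_{i\in I}N_i\subseteq K$; but $K$, being a prime submodule of the coprimely structured module $M$, is coprimely structured, whence $K+N_i=M$ for all $i$ gives $\bigcap_{i\in I}N_i\not\subseteq K$, a contradiction. Hence $K+\bigcap_{i\in I}N_i=M$.

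For $(iii)\Rightarrow(ii)$ I would take a maximal (hence prime) submodule $K$ and a family $\{N_i\}_{i\in I}$ with $\bigcap_{i\in I}N_i\subseteq K$; if no $N_j$ were contained in $K$, then maximality of $K$ would give $K+N_i=M$ for all $i$, so by (iii) $M=K+\bigcap_{i\in I}N_i=K$, which is absurd. Thus some $N_j\subseteq K$, so $K$ is strongly prime. For $(ii)\Rightarrow(i)$ I would invoke Theorem \ref{E:max}: it suffices to show that each maximal submodule $K$ is coprimely structured. Given a family $\{N_i\}_{i\in I}$ with $K+N_i=M$ for all $i$, if $\bigcap_{i\in I}N_i\subseteq K$ then strong primeness of $K$ yields $N_j\subseteq K$ for some $j$, so $M=K+N_j=K$, impossible; hence $\bigcap_{i\in I}N_i\not\subseteq K$, and Theorem \ref{E:max} then gives that $M$ is coprimely structured.

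None of these steps requires any real computation. The closest thing to an obstacle is the bookkeeping with maximality — specifically, ensuring in each implication that one correctly passes between a sum being proper and the corresponding submodule being contained in $K$ — together with the initial verification that maximal submodules are prime, so that both the hypothesis of Theorem \ref{E:max} and the definition of a strongly prime submodule legitimately apply to them.
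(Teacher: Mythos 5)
Your proof is correct and follows essentially the same approach as the paper: the same maximality bookkeeping (maximal submodules are prime, and a submodule containing $K$ is $K$ or $M$) drives each implication, with the finitely generated hypothesis entering only to place a prime inside a maximal submodule. The only cosmetic difference is that you traverse the cycle as $(i)\Rightarrow(iii)\Rightarrow(ii)\Rightarrow(i)$ and route the last step through Theorem \ref{E:max}, whereas the paper proves $(i)\Rightarrow(ii)\Rightarrow(iii)\Rightarrow(i)$ and inlines that embedding argument.
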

	\begin{proof}
		(i)$\Rightarrow$(ii) Let $\{N_i\}_{i\in{I}}$ be a family of submodules of $M$ and $K$ a maximal submodule of $M$ satisfying $\bigcap_{i\in{I}}N_i\subseteq{K}$. Since $M$ is coprimely structured, $N_j+K\neq{M}$ for some $j\in{I}$. Then $N_j\subseteq{K}$ and hence $K$ is strongly prime.\\
		(ii)$\Rightarrow$(iii) Let $\{N_i\}_{i\in{I}}$ be a family of submodules of $M$ and $K$ a maximal submodule of $M$ such that $K+N_i=M$ holds for each $i\in{I}$ . We have $N_i\not\subseteq{K}$ for each $i\in{I}$. Since $K$ is strongly prime, we obtain $\bigcap_{i\in{I}}N_i\not\subseteq{K}$. This implies $K+\bigcap_{i\in{I}}N_i=M$.\\
		(iii)$\Rightarrow$(i): Let $\{N_i\}_{i\in{I}}$ be a family of submodules of $M$ and $P$ a prime submodule of $M$ satisfying $\bigcap_{i\in{I}}N_i\subseteq{P}$. Since $M$ is finitely generated, the submodule $P$ is contained in a maximal submodule $K$ of $M$. Then we have $\bigcap_{i\in{I}}N_i\subseteq{K}$, and hence $K+\bigcap_{i\in{I}}N_i\neq{M}$. This implies $P+N_j\subseteq{K+N_j}\neq{M}$ for some $j\in{I}$. Therefore $M$ is coprimely structured.
	\end{proof}
	It is proved in \cite[2.4]{OAT} that a strongly $0$-dimensional multiplication module is zero-dimensional. Actually, the proof is valid if we drop the assumption that the module is a multiplication module.
	\begin{theorem}\label{E:zero-dim}
		Let $M$ be a finitely generated $R$-module. Then $M$ is a zero-dimensional coprimely structured module if and only if $M$ is a strongly $0$-dimensional module.
	\end{theorem}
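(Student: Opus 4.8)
The plan is to prove the two implications separately; each reduces almost immediately to a result already available in the excerpt, so the theorem is really a packaging statement.

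For the direction ``strongly $0$-dimensional $\Rightarrow$ zero-dimensional and coprimely structured'', I would argue as follows. That $M$ is coprimely structured is exactly Theorem \ref{E:0-dim}, which needs no hypothesis beyond strong $0$-dimensionality. That $M$ is zero-dimensional is the content of \cite[2.4]{OAT}; as remarked immediately before the theorem, the proof given there does not in fact use that the module is a multiplication module, so it applies verbatim in our setting. Thus this implication requires no new computation, and in particular does not use finite generation.

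For the converse, suppose $M$ is finitely generated, zero-dimensional, and coprimely structured, and let $P$ be an arbitrary prime submodule of $M$. Since $M$ is zero-dimensional, $P$ is a maximal submodule of $M$. Now apply Lemma \ref{E:maxstrong}: because $M$ is finitely generated and coprimely structured, every maximal submodule of $M$ is strongly prime, so $P$ is strongly prime. As $P$ was arbitrary, every prime submodule of $M$ is strongly prime, i.e.\ $M$ is strongly $0$-dimensional.

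I do not expect a genuine obstacle here once Lemma \ref{E:maxstrong} and the (dehypothesized) statement of \cite[2.4]{OAT} are in hand. The one point to be careful about is the meaning of ``zero-dimensional module'' — namely that every prime submodule is maximal — since it is precisely this that lets one pass from ``maximal submodules are strongly prime'' to ``all prime submodules are strongly prime''. It is also worth noting explicitly that the finite generation hypothesis enters only in the converse direction, through Lemma \ref{E:maxstrong}, which ultimately rests on the fact that every proper submodule of a finitely generated module is contained in a maximal submodule.
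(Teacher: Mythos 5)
Your proposal is correct and follows essentially the same route as the paper, which simply cites Theorem \ref{E:0-dim}, Lemma \ref{E:maxstrong}, and the preceding remark that the argument of \cite[2.4]{OAT} does not require the multiplication hypothesis. You have merely spelled out the details the paper leaves implicit, including the correct observation that zero-dimensionality is what lets one pass from maximal submodules to all prime submodules in the converse direction.
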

	\begin{proof} Follows from Theorem \ref{E:0-dim} and Lemma \ref{E:maxstrong}.
	\end{proof}
	An $R$-module $M$ is said to be a distributive module if the lattice of submodules of $M$ is distributive, that is, for any submodules $A,B,C$ of $M$, the equality $A\cap(B+C)=(A\cap B)+(A\cap C)$ holds. In \cite[2.4]{S}, Stephenson proved that for a local ring $R$ and a distributive $R$-module $M$, submodules of $M$ are comparable.  For a comprehensive study on distributive modules the reader may refer to \cite{VE, S}.
	\begin{theorem}\label{E:local}
		Let $R$ be a local ring and $M$ a finitely generated distributive module. Let $S$ be a multiplicatively closed subset of $R$. If $M$ is coprimely structured then $S^{-1}M$ is coprimely structured.
	\end{theorem}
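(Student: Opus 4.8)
The plan is to reduce the whole statement to a structural fact about the submodule lattice. First I would invoke the result of Stephenson cited above: since $R$ is local and $M$ is distributive, any two submodules of $M$ are comparable, i.e. the submodules of $M$ form a chain under inclusion. The key observation is then that any module whose submodule lattice is a chain is automatically coprimely structured, because in a chain the sum of two submodules is simply the larger of the two. So the real work is to show that the chain condition survives passage to $S^{-1}M$; once that is done, the conclusion follows formally, and in fact already from the hypotheses ``$R$ local'' and ``$M$ distributive'' alone.

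Next I would carry this out. Every submodule of $S^{-1}M$ has the form $S^{-1}N$ for a submodule $N$ of $M$ (take $N=\{m\in M: m/1\in L\}$ for a submodule $L$ of $S^{-1}M$), and extension is monotone: $N\subseteq N'$ gives $S^{-1}N\subseteq S^{-1}N'$. Hence, given two submodules $S^{-1}N_1,S^{-1}N_2$ of $S^{-1}M$, one of $N_1\subseteq N_2$, $N_2\subseteq N_1$ holds in $M$ and the corresponding inclusion passes to the localizations; so the submodules of $S^{-1}M$ again form a chain. Now let $P$ be a prime submodule of $S^{-1}M$, so $P\subsetneq S^{-1}M$, and let $\{N_i\}_{i\in I}$ be a family of submodules of $S^{-1}M$ with $N_i+P=S^{-1}M$ for every $i$. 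Since the lattice of $S^{-1}M$ is a chain, $N_i$ and $P$ are comparable and $N_i+P$ equals whichever of them is larger; as $P$ is proper, $N_i+P=S^{-1}M$ forces $N_i=S^{-1}M$. Therefore $\bigcap_{i\in I}N_i=S^{-1}M\nsubseteq P$, so $P$ is coprimely structured, and hence so is $S^{-1}M$.

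The only non-routine ingredient is the transfer of the chain condition, which rests on the standard identification of submodules of a localization with extended submodules; everything else is formal. I expect the main pitfall to be the temptation to argue ``directly'' instead — contracting a prime submodule $P$ of $S^{-1}M$ to a prime submodule $P^{c}$ of $M$, contracting the family $\{N_i\}$, and then invoking the hypothesis that $M$ is coprimely structured. The obstacle there is that $S^{-1}(N_i^{c}+P^{c})=S^{-1}M$ need not imply $N_i^{c}+P^{c}=M$, since localization can turn a proper submodule into an improper one, so the coprimality relation $N_i+P=M$ is not preserved under contraction; salvaging that route would require finite generation of $M$ together with a Nakayama-type argument. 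This is presumably why ``finitely generated'' and ``$M$ coprimely structured'' appear in the statement, even though the chain argument above dispenses with both.
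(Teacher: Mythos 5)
Your proof is correct, and it is genuinely different from --- and simpler than --- the one in the paper. The paper argues by contraction: it writes $N_i=S^{-1}K_i$ and $P=S^{-1}Q$, deduces $\bigcap_{i}K_i\subseteq Q$, applies the hypothesis that $M$ is coprimely structured to obtain $K_j+Q\neq M$, and then uses finite generation (to place $K_j+Q$ inside a maximal submodule) together with Stephenson's comparability result to transfer the non-coprimality back to $S^{-1}M$; the delicate point there is precisely the one you flag, namely that properness of $K_j+Q$ must be seen to survive localization. You bypass all of this by noting that Stephenson's theorem makes the submodule lattice of $M$ a chain, that the chain condition passes to $S^{-1}M$ because every submodule of the localization is extended and extension is monotone, and that a module whose submodules form a chain is trivially coprimely structured: for a proper $P$ in a chain, $N+P=M$ forces $N=M$, so the intersection of such $N$'s is all of the module and cannot lie in $P$. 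Your route therefore proves more: under ``$R$ local'' and ``$M$ distributive'' alone, both $M$ and $S^{-1}M$ are coprimely structured, so the hypotheses that $M$ be finitely generated and coprimely structured turn out to be superfluous. The only conventions worth making explicit are that prime submodules are proper by definition (needed to conclude $N=M$ above) and that submodules of $S^{-1}M$ are taken as $S^{-1}R$-submodules --- the same convention the paper's own proof adopts when it writes every submodule of $S^{-1}M$ in the form $S^{-1}K$.
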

	
	\begin{proof}
		Let $\{N_i\}_{i\in I}$ be a family of submodules and $P$ a prime submodule of $S^{-1}M$. Then for some family $\{K_i\}_{i\in I}$ of submodules and some prime submodule $Q$ of $M$ we have $N_i=S^{-1}K_i$ and $P=S^{-1}Q$. Assume that $\bigcap_{i\in I}N_i\subseteq P$. Then
		$$S^{-1}\left(\bigcap_{i\in I}K_i\right)\subseteq \bigcap_{i\in I}S^{-1}K_i=\bigcap_{i\in I}N_i\subseteq P=S^{-1}Q.$$
		Hence  $\bigcap_{i\in I}K_i\subseteq Q$. Since $M$ is coprimely structured, we have $K_j+Q\neq M$ for some $j\in I$. Since $M$ is finitely generated, there exists a maximal submodule $K$ of $M$ such that $K_j+Q\subseteq K$. As $K_j\subseteq K$, there exists a minimal prime submodule $Q_j$ of $M$ such that $K_j\subseteq Q_j \subseteq K$. Then $Q_j+Q\subseteq K$. Since $R$ is local and $M$ is distributive, either $Q_j\subseteq Q$ or $Q\subseteq Q_j$. Then $S^{-1}Q_j\subseteq S^{-1}Q$ or $S^{-1}Q \subseteq S^{-1}Q _j$. Hence we obtain $N_j+P\neq S^{-1}M$. Therefore $S^{-1}M$ is coprimely structured.
	\end{proof}

	\section{\bfseries{COPRIMELY STRUCTURED MULTIPLICATION MODULES}}
	In this section we study some properties of coprimely structured multiplication modules. An $R$-module $M$ is called a multiplication module if each submodule $N$ of $M$ is of the form $IM$ for some ideal $I$ of $R$
	As finitely generated modules, nonzero multiplication modules admits the property that every proper submodule is contained in a maximal submodule, by \cite [2.5] {BS}, we have the following theorems on multiplication modules similar to results on finitely generated modules mentioned above. The proofs are exactly the same, and hence omitted.
	\begin{theorem}
		Let $M$ be a multiplication $R$-module. If every maximal submodule of $M$ is coprimely structured, then $M$ is coprimely structured.
	\end{theorem}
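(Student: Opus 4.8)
The plan is to reproduce, essentially verbatim, the argument used for finitely generated modules in Theorem~\ref{E:max}. The only property of the finitely generated setting that was actually used there is that every proper submodule is contained in a maximal submodule; for nonzero multiplication modules this is exactly \cite[2.5]{BS}, so the same reasoning carries over with no change.

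Concretely, I would first dispose of the trivial case $M=0$: such an $M$ has no prime submodules whatsoever, hence is coprimely structured vacuously. So assume $M\neq 0$ and that every maximal submodule of $M$ is coprimely structured. Let $P$ be an arbitrary prime submodule of $M$ and let $\{N_i\}_{i\in I}$ be a family of submodules of $M$ with $\bigcap_{i\in I}N_i\subseteq P$; by the contrapositive form of the definition of a coprimely structured submodule, it suffices to produce an index $j\in I$ with $N_j+P\neq M$.

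Since $P$ is prime it is in particular a proper submodule of $M$, so by \cite[2.5]{BS} there is a maximal submodule $K$ of $M$ with $P\subseteq K$. Then $\bigcap_{i\in I}N_i\subseteq K$, and since $K$ is coprimely structured there exists $j\in I$ with $N_j+K\neq M$. Consequently $N_j+P\subseteq N_j+K\subsetneq M$, whence $N_j+P\neq M$, as required. Therefore every prime submodule of $M$ is coprimely structured, i.e. $M$ is coprimely structured.

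I expect no genuine obstacle here: the role of ``finitely generated'' in the proof of Theorem~\ref{E:max} was solely to guarantee that the prime submodule $P$ sits inside some maximal submodule, and this is available for nonzero multiplication modules by \cite[2.5]{BS}. The only point worth flagging is the degenerate case $M=0$, which has no prime submodules and so is handled separately and trivially.
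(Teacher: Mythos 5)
Your proof is correct and is exactly the argument the paper intends: the paper explicitly omits this proof because it is the same as that of Theorem~\ref{E:max}, with \cite[2.5]{BS} supplying the existence of a maximal submodule above any proper submodule of a nonzero multiplication module. Your extra care with the degenerate case $M=0$ is a reasonable (vacuous) addition but changes nothing substantive.
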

	\begin{lemma}
		Let $M$ be a multiplication $R$-module. The following are equivalent:
		\begin{enumerate}[(i)]
			\item $M$ is coprimely structured.
			\item Every maximal submodule $K$ of $M$ is strongly prime.
			\item  For any maximal submodule $K$ and any family $\{N_i\}_{i\in{I}}$ of submodules of $M$, $K+N_i=M$, for all $i\in{I}$, implies  $K+\bigcap_{i\in{I}}N_i=M$.
		\end{enumerate}
	\end{lemma}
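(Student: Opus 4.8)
The plan is to follow verbatim the proof of Lemma~\ref{E:maxstrong}. The only property of finitely generated modules exploited there is that every proper submodule is contained in a maximal submodule; by \cite[2.5]{BS} a nonzero multiplication module enjoys the same property (and if $M=0$ the statement is vacuous, there being no prime submodule), so the whole argument transfers without change. I will carry out the three implications in the cyclic order (i)$\Rightarrow$(ii)$\Rightarrow$(iii)$\Rightarrow$(i).

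For (i)$\Rightarrow$(ii), I would take a maximal submodule $K$ and a family $\{N_i\}_{i\in I}$ of submodules with $\bigcap_{i\in I}N_i\subseteq K$; since $M$ is coprimely structured, $N_j+K\neq M$ for some $j\in I$, and maximality of $K$ forces $N_j\subseteq K$, so $K$ is strongly prime. For (ii)$\Rightarrow$(iii), let $K$ be a maximal submodule and $\{N_i\}_{i\in I}$ a family with $K+N_i=M$ for every $i$; then $N_i\not\subseteq K$ for each $i$, so strong primeness of $K$ gives $\bigcap_{i\in I}N_i\not\subseteq K$, whence $K+\bigcap_{i\in I}N_i=M$, again by maximality of $K$.

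For (iii)$\Rightarrow$(i), I would take a prime submodule $P$ and a family $\{N_i\}_{i\in I}$ with $\bigcap_{i\in I}N_i\subseteq P$; since $P$ is proper and $M$ is a multiplication module, $P$ lies in a maximal submodule $K$, so $\bigcap_{i\in I}N_i\subseteq K$, and the contrapositive of (iii) yields some $j\in I$ with $K+N_j\neq M$, hence $P+N_j\subseteq K+N_j\neq M$, so $P$ is coprimely structured. The argument is entirely routine; the only place the hypothesis on $M$ is used is the passage from a prime submodule to a maximal submodule containing it in (iii)$\Rightarrow$(i), which is exactly where \cite[2.5]{BS} is invoked, so no genuine obstacle arises.
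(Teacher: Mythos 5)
Your proof is correct and is exactly the argument the paper intends: it states that the proofs are "exactly the same" as in the finitely generated case, with the sole modification being the appeal to \cite[2.5]{BS} for the existence of a maximal submodule containing a given proper submodule of a nonzero multiplication module, precisely as you identify. All three implications match the paper's proof of the finitely generated version verbatim.
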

	\begin{theorem}
		Let $M$ be a zero-dimensional multiplication $R$-module. Then $M$ is coprimely structured if and only if $M$ is strongly $0$-dimensional.
	\end{theorem}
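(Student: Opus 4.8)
The plan is to mirror the proof of Theorem~\ref{E:zero-dim}, replacing the appeal to Lemma~\ref{E:maxstrong} by its multiplication-module analogue stated just above and invoking Theorem~\ref{E:0-dim}. Neither implication needs a genuinely new idea; the only care required is to keep straight which of the two standing hypotheses (zero-dimensionality, the multiplication property) is used where.

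For the implication ``$M$ strongly $0$-dimensional $\Rightarrow$ $M$ coprimely structured'', I would simply cite Theorem~\ref{E:0-dim}: every strongly $0$-dimensional $R$-module is coprimely structured. Note that this half uses neither that $M$ is zero-dimensional nor that $M$ is a multiplication module; consistency with the biconditional is still fine, since, as recalled before Theorem~\ref{E:zero-dim}, a strongly $0$-dimensional module is automatically zero-dimensional.

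For the converse, suppose $M$ is a zero-dimensional coprimely structured multiplication module and let $P$ be an arbitrary prime submodule of $M$. Since $M$ is a multiplication module, every proper submodule of $M$ lies in a maximal submodule (by \cite[2.5]{BS}), so the equivalence (i)$\Leftrightarrow$(ii) of the multiplication-module version of Lemma~\ref{E:maxstrong} applies: $M$ being coprimely structured forces every maximal submodule of $M$ to be strongly prime. Because $M$ is zero-dimensional, the prime submodule $P$ is maximal, hence strongly prime; as $P$ was arbitrary, every prime submodule of $M$ is strongly prime, i.e.\ $M$ is strongly $0$-dimensional. There is no real obstacle here: the zero-dimensionality hypothesis is exactly what upgrades ``every maximal submodule is strongly prime'' to ``every prime submodule is strongly prime'', while the multiplication hypothesis only licenses the use of the preceding lemma, so the argument reduces to a single line, just as in Theorem~\ref{E:zero-dim}.
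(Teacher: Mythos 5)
Your proposal is correct and matches the paper's intended argument: the paper omits the proof, stating it is identical to that of the finitely generated case, which is derived exactly as you do from Theorem \ref{E:0-dim} for one direction and from the equivalence (i)$\Leftrightarrow$(ii) of the maximal-submodule lemma together with zero-dimensionality for the other. Your added remarks on which hypotheses are used where are accurate and harmless.
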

	
	Next, we prove a theorem that gives a characterization of coprimely structured multiplication modules in terms of families of prime submodules and maximal submodules. To this aim, we state some definitions and notations. For a submodule $N$ of $M$, the radical of $N$, denoted by $\rad(N)$, is defined as the intersection of all prime submodules of $M$ that contain $N$.  In \cite[3.3] {RA}, Ameri defines the product of two submodules $N=IM$ and $K=JM$ of a multiplication $R$-module $M$ as $(IJ)M$. Accordingly, the product of two elements $m,m'\in{M}$ is defined as the product of the submodules $Rm$ and $Rm'$. It is shown in \cite [3.13] {RA} that $\rad(N)=\{m\in{M} : m^k\subseteq{N} \textnormal{ for some } k\ge0\}$ for a submodule $N$ of a multiplication $R$-module $M$.
	
	A family $\{N_i\}_{i\in{I}}$ of submodules of a multiplication  $R$-module $M$ is said to satisfy property (*) if for each $x\in{M}$, there is an $n\in\nat$ such that $x\in\rad{(N_i)}$ implies $x^n\subseteq{N_i}$. We note that if we consider $R$ as a module over itself, this property is the same as the condition A2 in \cite[7]{A}. Accordingly, the following lemma is a generalization of \cite[2]{BR}. 
	\begin{lemma}\label{E:radical}
		A family $\{N_i\}_{i\in{I}}$ of submodules of a multiplication  $R$-module $M$ satisfies the property (*) if and only if for each subset $J\subseteq{I}$, $$\rad(\bigcap_{i\in{J}}N_i)=\bigcap_{i\in{J}}\rad(N_i).$$
	\end{lemma}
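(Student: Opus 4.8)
The plan is to prove both implications directly from the characterization $\rad(N)=\{m\in M:\ m^k\subseteq N \text{ for some }k\ge 0\}$, together with the easy inclusion $\rad\bigl(\bigcap_{i\in J}N_i\bigr)\subseteq\bigcap_{i\in J}\rad(N_i)$, which holds for any family since $\bigcap_{i\in J}N_i\subseteq N_i$ implies $\rad\bigl(\bigcap_{i\in J}N_i\bigr)\subseteq\rad(N_i)$ for every $i\in J$. So in both directions only the reverse inclusion is at stake, and I would reduce everything to a statement about a single element $x\in M$ lying in $\bigcap_{i\in J}\rad(N_i)$.

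First I would prove the ``if'' direction: assume $\rad\bigl(\bigcap_{i\in J}N_i\bigr)=\bigcap_{i\in J}\rad(N_i)$ for every $J\subseteq I$, and fix $x\in M$. Apply the hypothesis with $J=\{i\in I:\ x\in\rad(N_i)\}$ (if this set is empty the conclusion is vacuous, so take $n=1$). Then $x\in\bigcap_{i\in J}\rad(N_i)=\rad\bigl(\bigcap_{i\in J}N_i\bigr)$, so by the description of the radical there is some $n\in\nat$ with $x^n\subseteq\bigcap_{i\in J}N_i$, hence $x^n\subseteq N_i$ for every $i\in J$, i.e.\ for every $i$ with $x\in\rad(N_i)$. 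This $n$ works, so the family satisfies property (*).

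For the ``only if'' direction, assume $\{N_i\}_{i\in I}$ satisfies property (*) and fix $J\subseteq I$; I must show $\bigcap_{i\in J}\rad(N_i)\subseteq\rad\bigl(\bigcap_{i\in J}N_i\bigr)$. Let $x\in\bigcap_{i\in J}\rad(N_i)$. By property (*) there is an $n\in\nat$, depending only on $x$, such that $x\in\rad(N_i)$ implies $x^n\subseteq N_i$; since $x\in\rad(N_i)$ for all $i\in J$, we get $x^n\subseteq N_i$ for all $i\in J$, hence $x^n\subseteq\bigcap_{i\in J}N_i$, and therefore $x\in\rad\bigl(\bigcap_{i\in J}N_i\bigr)$ again by the element-description of the radical. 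Combining with the trivial inclusion gives equality.

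The only genuine subtlety — and the step I would be most careful about — is the use of the radical description $\rad(N)=\{m\in M:\ m^k\subseteq N\ \text{for some }k\ge 0\}$, which is exactly the point where multiplicativity of $M$ is needed (so that the powers $x^k=(Rx)^k M$ make sense as submodules), and the quiet fact that $m^k\subseteq N$ for some $k\ge 0$ is equivalent to membership in the radical \emph{uniformly}, i.e.\ one should make sure the exponent $n$ furnished by property (*) is allowed to be used for \emph{all} the relevant $N_i$ simultaneously, which is precisely what property (*) guarantees by quantifying ``$\forall x\ \exists n\ \forall i$'' rather than ``$\forall x\ \forall i\ \exists n$''. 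Once that logical order is kept straight, both directions are a direct unwinding of definitions with no further computation.
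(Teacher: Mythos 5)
Your proposal is correct and follows essentially the same argument as the paper: both directions reduce to the element description $\rad(N)=\{m\in M: m^k\subseteq N \text{ for some } k\ge 0\}$, with the "if" direction using the index set $J=\{i\in I: x\in\rad(N_i)\}$ exactly as the paper does. The only difference is your explicit (and harmless) handling of the case where that set is empty.
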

	\begin{proof}
		Let $\{N_i\}_{i\in{I}}$ be a family of submodules of a multiplication $R$-module $M$. Assume that the family $\{N_i\}_{i\in{I}}$ satisfies the property (*). Let $J$ be a subset of $I$. The inclusion $\rad(\bigcap_{i\in{J}}N_i)\subseteq\bigcap_{i\in{J}}\rad(N_i)$ always holds. For the reverse inclusion let $x\in{\bigcap_{i\in{J}}\rad(N_i)}$. Then for all $i\in{J}$ we have $x\in\rad(N_i)$. Since $\{N_i\}_{i\in{I}}$ satisfies the property (*), there exists an $n\in{\nat}$ such that $x^n\subseteq{N_i}$ for each $i\in{J}$. This implies $x^n\subseteq\bigcap_{i\in{J}}N_i$. Hence we obtain $x\in\rad(\bigcap_{i\in{J}}N_i)$. Conversely, assume for each subset $J$ of $I$, that the equation $\rad(\bigcap_{i\in{J}}N_i)=\bigcap_{i\in{J}}\rad(N_i)$ holds. Let $x\in{M}$ and set $J=\{i\in{I} : x\in{\rad(N_i)}\}$. Then $x\in\bigcap_{i\in{J}}\rad(N_i)=\rad(\bigcap_{i\in{J}}N_i)$. Therefore there is an $n\in{\nat}$ such that $x^n\in{\bigcap_{i\in{J}}N_i}$. Since $\bigcap_{i\in{J}}N_i\subseteq{N_i}$ for each $i\in{J}$, we conclude that $x^n\subseteq{N_i}$ for each $i\in{J}$. Therefore $\{N_i\}_{i\in{I}}$ satisfies the property (*).
		
	\end{proof}
	\begin{theorem}\label{E:maximal}
		Let $M$ be a multiplication $R$-module. If $M$ is coprimely structured, then for any family $\{P_i\}_{i\in{I}}$ of prime submodules and any maximal submodule $K$ of $M$, the inclusion $\bigcap_{i\in{I}}P_i\subseteq{K}$ implies  $P_j\subseteq{K}$ for some $j\in{I}$. The converse is true if the property (*) is satisfied by any family of submodules of $M$.
	\end{theorem}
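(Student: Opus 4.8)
The plan is to prove the two directions separately. For the forward direction, assume $M$ is coprimely structured, let $\{P_i\}_{i\in I}$ be a family of prime submodules, and let $K$ be a maximal submodule with $\bigcap_{i\in I}P_i\subseteq K$. Since a maximal submodule of a multiplication module is in particular prime, Lemma~\ref{E:maxstrong} (whose analogue for multiplication modules is stated just above as the unnumbered lemma) tells us that $K$ is strongly prime. Applying the definition of strongly prime directly to the family $\{P_i\}_{i\in I}$ and the inclusion $\bigcap_{i\in I}P_i\subseteq K$, we immediately get $P_j\subseteq K$ for some $j\in I$. This direction is short and essentially formal.

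For the converse, assume the stated property about intersections of prime submodules and maximal submodules, and assume additionally that every family of submodules of $M$ satisfies property (*). By the multiplication-module version of Lemma~\ref{E:maxstrong}, it suffices to show that every maximal submodule $K$ of $M$ is strongly prime; equivalently (again by that lemma) it suffices to show that $M$ is coprimely structured, so I would aim to verify condition (iii): for any maximal submodule $K$ and any family $\{N_i\}_{i\in I}$ with $K+N_i=M$ for all $i$, we have $K+\bigcap_{i\in I}N_i=M$. Suppose not; then $\bigcap_{i\in I}N_i\subseteq K$, hence $\rad(\bigcap_{i\in I}N_i)\subseteq\rad(K)=K$ (as $K$ is already prime, its radical is itself). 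Now invoke property (*) via Lemma~\ref{E:radical}: taking $J=I$, we get $\bigcap_{i\in I}\rad(N_i)=\rad(\bigcap_{i\in I}N_i)\subseteq K$. Each $\rad(N_i)$ is an intersection of prime submodules, so $\bigcap_{i\in I}\rad(N_i)$ is again an intersection of a (larger) family $\{P_\lambda\}_{\lambda\in\Lambda}$ of prime submodules of $M$, each containing some $N_i$. By hypothesis, $P_{\lambda_0}\subseteq K$ for some $\lambda_0\in\Lambda$; but $P_{\lambda_0}\supseteq N_{i_0}$ for the corresponding $i_0\in I$, whence $N_{i_0}\subseteq P_{\lambda_0}\subseteq K$, contradicting $K+N_{i_0}=M$. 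Therefore $K+\bigcap_{i\in I}N_i=M$, and by Lemma~\ref{E:maxstrong} (multiplication version) $M$ is coprimely structured.

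The main obstacle I anticipate is in the converse: the hypothesis is phrased in terms of a family of prime submodules, but condition (iii) of the lemma is phrased in terms of an arbitrary family of submodules $N_i$ — bridging this gap is exactly where property (*) does its work, replacing each $N_i$ by its radical without losing the containment in $K$ (this uses $\rad(K)=K$) and without changing the intersection "modulo radicals" (this uses Lemma~\ref{E:radical}). One should be slightly careful that $\rad(N_i)$ may be a proper intersection of primes and that the reindexing into a single family $\{P_\lambda\}$ is harmless; also one should check the degenerate cases where some $N_i=M$ or $\rad(N_i)=M$, which are trivial. Apart from that bookkeeping, both directions reduce cleanly to the strongly-prime characterization of coprimely structured multiplication modules together with the radical-intersection identity supplied by property (*).
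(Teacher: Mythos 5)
Your proposal is correct and follows essentially the same route as the paper: the forward direction is the same one-line use of maximality (you merely route it through the strongly-prime lemma), and the converse uses property (*) together with Lemma~\ref{E:radical} to replace each $N_i$ by $\rad(N_i)$, write the resulting intersection as an intersection of a family of prime submodules, and apply the hypothesis at a maximal submodule containing the given prime. The only cosmetic difference is that the paper argues directly with an arbitrary prime submodule $P$ and a maximal submodule $L\supseteq P$ rather than invoking condition (iii) of the unnumbered multiplication-module lemma.
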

	\begin{proof}
		Assume that $M$ is coprimely structured. Let $\{P_i\}_{i\in{I}}$ be a family of submodules of $M$ and $K$ a maximal submodule of $M$ such that $\bigcap_{i\in{I}}P_i\subseteq{K}$. We have $P_j+K\neq{M}$ for some $j\in{I}$. Thus $P_j\subseteq{K}$. Conversely, assume that the property (*) is satisfied by any family of submodules of $M$. Further, assume, for any family $\{P_i\}_{i\in{I}}$ of prime submodules and  any maximal submodule $K$ of $M$, that the inclusion  $\bigcap_{i\in{I}}P_i\subseteq{K}$ implies  $P_j\subseteq{K}$ for some $j\in{I}$. Let $\{N_\alpha\}_{\alpha\in{A}}$ be a family of submodules of $M$ and $P$ a prime submodule of $M$ such that $\bigcap_{\alpha\in{A}}N_\alpha\subseteq{P}$. Then $\rad(\bigcap_{\alpha\in{A}}N_\alpha)\subseteq\rad(P)=P$. As $M$ is a multiplication module, $P$ is contained in a maximal submodule $L$ of $M$. Since, by assumption, the property (*) is satisfied by $\{N_\alpha\}_{\alpha\in{A}}$, using Lemma \ref{E:radical}, we obtain $\bigcap_{\alpha\in{A}}\rad(N_\alpha)=\rad(\bigcap_{\alpha\in{A}}N_\alpha)\subseteq{P}\subseteq{L}$. Besides, for each $\alpha\in{A}$, $$\rad(N_\alpha)=\bigcap_{\beta\in{B} \atop N_\alpha\subseteq{P_{\beta,\alpha}}}P_{\beta,\alpha}$$ for some family $\{P_{\beta,\alpha}\}_{\beta\in{B}}$ of prime submodules of $M$. Therefore,
		$$\bigcap_{(\alpha,\beta)\in{A\times{B}} \atop N_\alpha\subseteq{P_{\beta,\alpha}}}P_{\beta,\alpha}=\bigcap_{\alpha\in{A}}\bigcap_{\beta\in{B} \atop N_\alpha\subseteq{P_{\beta,\alpha}}}P_{\beta,\alpha}=\bigcap_{\alpha\in{A}}\rad(N_\alpha)\subseteq{L}.$$
		Then, by assumption, we have $P_{\lambda,\kappa}\subseteq{L}$ for some $(\lambda,\kappa)\in{A\times{B}}$. This implies  $P_{\lambda,\kappa}+L\neq{M}$ for some $(\lambda,\kappa)\in{A\times{B}}$. Hence, for some $\lambda\in{A}$, we have $N_\lambda+P\subseteq{P_{\lambda,\kappa}}+P\subseteq{P_{\lambda,\kappa}}+L\neq{M}$. Thus $M$ is coprimely structured.
	\end{proof}
	\begin{theorem}\label{E:zerorad}
		Let $M$ be a multiplication $R$-module. Assume that the property (*) is satisfied for any family of submodules of $M$. Let $N$ be a submodule of $M$ which is contained in $\rad(0)$. Then $M/N$ is coprimely structured if and only if $M$ is coprimely structured.
	\end{theorem}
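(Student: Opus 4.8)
The plan is to prove the two directions separately, the forward one being immediate and the converse resting on the characterization in Theorem \ref{E:maximal}.

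For the ``only if'' part, observe that $M/N$ is a homomorphic image of $M$, so if $M$ is coprimely structured then $M/N$ is coprimely structured by Corollary \ref{E:quotient}; no hypothesis beyond ``$M$ coprimely structured'' is used here.

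For the ``if'' part, the key preliminary observation is that $\rad(0)$ is by definition the intersection of all prime submodules of $M$, so the hypothesis $N\subseteq\rad(0)$ forces $N\subseteq P$ for every prime submodule $P$ of $M$; in particular $N$ lies in every maximal submodule of $M$. Hence the order-preserving correspondence $L\mapsto L/N$ between submodules of $M$ containing $N$ and submodules of $M/N$ restricts to bijections between the prime (respectively maximal) submodules of $M$ and those of $M/N$, and it carries intersections of families of submodules that contain $N$ to the corresponding intersections in $M/N$. Moreover $M/N$ is again a multiplication module: if $L/N$ is a submodule of $M/N$ with $L=IM$, then $L/N=I(M/N)$.

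It then remains to verify the hypothesis of the converse part of Theorem \ref{E:maximal} for $M$. Take a family $\{P_i\}_{i\in I}$ of prime submodules of $M$ and a maximal submodule $K$ of $M$ with $\bigcap_{i\in I}P_i\subseteq K$. Since each $P_i$ and $K$ contain $N$, the correspondence above yields $\bigcap_{i\in I}(P_i/N)\subseteq K/N$ in $M/N$, a relation among prime submodules $P_i/N$ and a maximal submodule $K/N$ of the coprimely structured multiplication module $M/N$. By Theorem \ref{E:maximal} applied to $M/N$ there is $j\in I$ with $P_j/N\subseteq K/N$, whence $P_j\subseteq K$. Since by hypothesis every family of submodules of $M$ satisfies property (*) and $M$ is a multiplication module, the converse part of Theorem \ref{E:maximal} now gives that $M$ is coprimely structured.

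The only real work is the routine bookkeeping in the quotient: that $M/N$ is a multiplication module and that prime submodules, maximal submodules, and intersections all transfer correctly under $L\leftrightarrow L/N$. This is exactly where $N\subseteq\rad(0)$ is essential — it guarantees that no prime (hence no maximal) submodule of $M$ is lost when passing to $M/N$, so that a relation $\bigcap_i P_i\subseteq K$ among primes and a maximal submodule downstairs both comes from and lifts back to such a relation upstairs. I expect no genuine obstacle beyond keeping this correspondence straight.
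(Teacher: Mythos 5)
Your proof is correct and takes essentially the same route as the paper's: both directions are handled identically, with the converse reduced to the characterization in Theorem \ref{E:maximal} by using $N\subseteq\rad(0)$ to transfer the relation $\bigcap_{i}P_i\subseteq K$ between $M$ and $M/N$. The only cosmetic difference is that you invoke the forward implication of Theorem \ref{E:maximal} for $M/N$ where the paper argues directly from the definition of coprimely structured (via $P_j/N+K/N\neq M/N$ and maximality of $K/N$).
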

	\begin{proof}
		Assume that $M/N$ is coprimely structured. Let $\{P_i\}_{i\in{I}}$ be a family of prime submodules of $M$ and $K$ a maximal submodule of $M$ satisfying $\bigcap_{i\in{I}}P_i\subseteq{K}$. Then, as $N\subseteq{\rad(0)}$, we obtain $$\bigcap_{i\in{I}}P_i/N=(\bigcap_{i\in{I}}P_i)/N\subseteq{K/N}.$$ Since $K$ is maximal, $K/N$ is maximal in $M/N$. Then, for some $j\in{I}$, we have $P_j/N+K/N\neq{M/N}$. This implies $P_j/N\subseteq{K/N}$. Therefore, for some $j\in{I}$ the inclusion $P_j\subseteq{K}$ holds. Using Theorem \ref{E:maximal} we conclude that $M$ is coprimely structured. The converse follows from Corollary \ref{E:quotient}.
	\end{proof}
	\begin{theorem}\label{E:ringmod}
		Let $M$ be a finitely generated faithful multiplication $R$-module. $M$ is coprimely structured if and only if $R$ is coprimely structured.
	\end{theorem}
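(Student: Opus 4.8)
The plan is to pass everything through the standard submodule--ideal dictionary available for a finitely generated faithful multiplication module. First I would recall the facts I need: if $M$ is a finitely generated faithful multiplication $R$-module, then the assignments $N\mapsto (N:M)$ and $I\mapsto IM$ are mutually inverse, inclusion-preserving bijections between the lattice of submodules of $M$ and the lattice of ideals of $R$; in particular $(IM:M)=I$ for every ideal $I$, so the bijection is also order-reflecting (that is, $IM\subseteq JM$ iff $I\subseteq J$). Under this correspondence prime submodules match prime ideals and maximal submodules match maximal ideals, arbitrary sums go to sums, and arbitrary intersections go to intersections; all of this is standard for finitely generated faithful multiplication modules (see \cite{BS, B}). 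The only point requiring a word of justification is $\bigcap_{i}(I_iM)=\bigl(\bigcap_i I_i\bigr)M$: the left-hand side is a submodule, hence equals $JM$ for some ideal $J$ since $M$ is a multiplication module, and $JM\subseteq I_iM$ forces $J\subseteq I_i$ for each $i$ by the order-reflecting property, so $J\subseteq\bigcap_i I_i$ and the two submodules coincide.

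Next I would record the translation of the two relations that appear in the definition of ``coprimely structured''. If $N=IM$ and $P=\mathfrak{p}M$, then $N+P=(I+\mathfrak{p})M$, and because $M$ is finitely generated and faithful, $(I+\mathfrak{p})M=M$ is equivalent to $I+\mathfrak{p}=R$: indeed $(I+\mathfrak{p})M=M$ yields, by Nakayama, an element $r\in I+\mathfrak{p}$ with $(1-r)M=0$, so $1-r\in\text{Ann}(M)=0$ and $1=r\in I+\mathfrak{p}$. Likewise, via the order-reflecting bijection, $\bigcap_i N_i\subseteq P$ is equivalent to $\bigcap_i I_i\subseteq\mathfrak{p}$, where $N_i=I_iM$ and $P=\mathfrak{p}M$.

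With this dictionary the equivalence is a direct two-way translation. For ``$R$ coprimely structured $\Rightarrow$ $M$ coprimely structured'': given a family $\{N_i\}_{i\in I}$ of submodules and a prime submodule $P$ of $M$ with $N_i+P=M$ for all $i$, write $N_i=I_iM$ and $P=\mathfrak{p}M$ with $\mathfrak{p}$ prime; then $I_i+\mathfrak{p}=R$ for all $i$, so $\bigcap_i I_i\nsubseteq\mathfrak{p}$ since $R$ is coprimely structured, and therefore $\bigcap_i N_i=\bigl(\bigcap_i I_i\bigr)M\nsubseteq\mathfrak{p}M=P$. For the converse: given a family $\{I_i\}_{i\in I}$ of ideals and a prime ideal $\mathfrak{p}$ with $I_i+\mathfrak{p}=R$ for all $i$, put $N_i=I_iM$ and $P=\mathfrak{p}M$; then $P$ is a prime submodule of $M$ (as $\mathfrak{p}$ is prime and $\mathfrak{p}M\neq M$), and $N_i+P=M$ for all $i$, so $\bigcap_i N_i\nsubseteq P$ because $M$ is coprimely structured, and translating back gives $\bigcap_i I_i\nsubseteq\mathfrak{p}$.

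The main obstacle is just the preliminary bookkeeping: pinning down the precise list of properties of the submodule--ideal correspondence for finitely generated faithful multiplication modules --- especially the preservation of arbitrary intersections and the equality $(IM:M)=I$ --- and citing them cleanly. Once that dictionary is in place, no further idea is needed and the equivalence follows immediately.
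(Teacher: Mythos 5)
Your proposal is correct and follows essentially the same route as the paper: both directions are proved by translating submodules, primes, sums, and intersections through the ideal--submodule correspondence $I\leftrightarrow IM$ for a finitely generated faithful multiplication module, with the key order-reflecting facts being exactly the ones the paper cites from \cite{BS} (their 3.1 for $IM\subseteq JM\Leftrightarrow I\subseteq J$ and 1.6 for intersections). The only cosmetic difference is that you argue the definition directly while the paper argues the contrapositive form, and you re-derive the intersection and Nakayama facts that the paper simply cites.
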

	\begin{proof}
		Assume that $M$ is a coprimely structured module. Let $\{I_\alpha\}_{\alpha\in{A}}$ be a family of ideals of $R$ and $P$  a prime ideal of $R$ satisfying $\bigcap_{\alpha\in{A}}I_\alpha\subseteq{P}$. Then, we have $\bigcap_{\alpha\in{A}}(I_\alpha{M})=\left(\bigcap_{\alpha\in{A}}I_\alpha\right)M\subseteq{PM}$. Since $M$ is coprimely structured, there exists $\beta\in{A}$ such that $(I_\beta+P)M=I_\beta{M}+PM\neq{M}$. Therefore, by \cite[3.1]{BS}, we obtain $I_\beta+P\neq{R}$, and hence $R$ is coprimely structured. Conversely, assume that $R$ is coprimely structured. Let $\{N_\lambda\}_{\lambda\in{L}}$ be a family of submodules of $M$ and $Q'$ a prime submodule of $M$. Suppose $\bigcap_{\lambda\in{L}}N_\lambda\subseteq{Q'}$. Since $M$ is a multiplication module there exist a family $\{I_\lambda\}_{\lambda\in{L}}$ of ideals of $R$ and a prime ideal $Q$ of $R$ such that $N_\lambda=I_\lambda{M}$, for all $\lambda\in{L}$, and $Q'=QM$, by \cite[2.11]{BS}. Then, using \cite[1.6]{BS}, we have $(\bigcap_{\lambda\in{L}}I_\lambda)M=\bigcap_{\lambda\in{L}}(I_\lambda{M})=\bigcap_{\lambda\in{L}}N_\lambda\subseteq{Q'}=QM$, and by \cite[3.1]{BS}, we obtain $\bigcap_{\lambda\in{L}}I_\lambda\subseteq{Q}$. Since $R$ is coprimely structured, there exists $\kappa\in{L}$ such that $I_\kappa+Q\neq{R}$. Then, using \cite[3.1]{BS}, we conclude that $N_\kappa+Q'=(I_\kappa{M}+P)M\neq{M}$. Thus $M$ is coprimely structured.
	\end{proof}
	
	\begin{theorem}
		Every Artinian multiplication module is coprimely structured.
	\end{theorem}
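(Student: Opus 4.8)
The plan is to prove the formally stronger statement that every Artinian multiplication module $M$ is strongly $0$-dimensional, and then invoke Theorem \ref{E:0-dim}. Thus I would fix a prime submodule $P$ of $M$ together with a family $\{N_i\}_{i\in I}$ of submodules satisfying $\bigcap_{i\in I}N_i\subseteq P$, and aim to produce an index $j$ with $N_j\subseteq P$. The case $I=\emptyset$ is vacuous, since then $\bigcap_{i\in I}N_i=M\nsubseteq P$; so one may assume $M\neq 0$.

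The first step uses only the descending chain condition. Let $\mathcal{F}$ be the collection of all submodules of the form $N_{i_1}\cap\dots\cap N_{i_n}$ with $n\ge 1$ and $i_1,\dots,i_n\in I$. It is nonempty and closed under finite intersection, so, since $M$ is Artinian, it has a minimal member $N_0=N_{i_1}\cap\dots\cap N_{i_n}$. For any $j\in I$ we have $N_0\cap N_j\in\mathcal{F}$ and $N_0\cap N_j\subseteq N_0$, so minimality gives $N_0\subseteq N_j$; consequently $N_0=\bigcap_{i\in I}N_i\subseteq P$. This reduces the problem to a finite intersection.

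The second step is where the multiplication hypothesis is used: I would pass from the finite intersection to one of its members via the product of submodules. Writing $N_{i_k}=I_{i_k}M$, the product submodule $N_{i_1}\cdots N_{i_n}=(I_{i_1}\cdots I_{i_n})M$ is contained in every $N_{i_k}$, hence in $N_0\subseteq P$; since $P$ is a prime submodule of a multiplication module, a product of submodules contained in $P$ has one of its factors contained in $P$ (recall $P=(P:M)M$ with $(P:M)$ a prime ideal, so that $NK\subseteq P$ and $N\nsubseteq P$ force $K\subseteq P$, as in \cite{RA}). An induction on $n$ then gives $N_{i_k}\subseteq P$ for some $k$, completing the verification that $P$ is strongly prime.

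Hence $M$ is strongly $0$-dimensional, and Theorem \ref{E:0-dim} yields that $M$ is coprimely structured. I do not expect a serious obstacle here: both reductions are short, and the only points needing attention are citing the prime-submodule property of multiplication modules in the correct form and disposing of the degenerate case $M=0$ at the outset.
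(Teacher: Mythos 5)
Your argument is correct and follows the same route as the paper: both reduce the statement to the claim that an Artinian multiplication module is strongly $0$-dimensional and then invoke Theorem \ref{E:0-dim}. The only difference is that the paper obtains the strongly $0$-dimensional claim by citing \cite[2.6]{OAT}, whereas you reprove it directly; your two steps (the minimal-element reduction of the arbitrary intersection to a finite one via the descending chain condition, and then the fact that for a prime submodule $P$ of a multiplication module $NK\subseteq P$ forces $N\subseteq P$ or $K\subseteq P$, since $P=(P:M)M$ with $(P:M)$ prime) are both sound.
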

	\begin{proof}
		Every Artinian multiplication module is strongly $0$-dimensional by \cite[2.6]{OAT}. The result follows from Theorem \ref{E:0-dim}.
	\end{proof}
	
	\section{\bfseries{THE PROPERTY (*)}}
	
	In \cite{BR}, Brewer and Richman give some characterizations of zero-dimensional rings. Here we generalize some of these results under certain conditions. In particular, if $R$ is a principal ideal ring and $M$ is a finitely generated faithful multiplication $R$-module, the property (*) we introduced in Section 4 can be used to determine whether $M$ is zero-dimensional or not. Before giving that result we need some lemma. $R$ is assumed to be a principal ideal ring in the following.
	
	\begin{lemma}\label{E:pir}
		Let $M$ be a finitely generated multiplication $R$-module and $m$ an element of $M$ such that $Rm=IM$. The following conditions are equivalent:
		\begin{enumerate}[(i)]
			\item There exists an $n\in\nat$ such that $I^nM=I^{n+1}M$.
			\item $IM+\bigcup_{n=1}^\infty(0:_MI^n)=M$.
		\end{enumerate}
	\end{lemma}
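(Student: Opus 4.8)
The plan is to reduce immediately to the principal case. Since $R$ is a principal ideal ring, I would write $I=(a)$ for some $a\in R$, so that $I^kM=a^kM$ and $(0:_MI^k)=\{x\in M:a^kx=0\}$ for every $k\in\nat$; both conditions then become statements about the single element $a$. Note also that $(0:_MI)\subseteq(0:_MI^2)\subseteq\cdots$, so $\bigcup_{n\ge1}(0:_MI^n)$ is a submodule of $M$ and the sum appearing in (ii) is meaningful. It is worth noting in advance that the argument will use only the principal-ideal hypothesis and, for the implication (ii)$\Rightarrow$(i), finite generation of $M$; the statement and proof parallel the ring-theoretic result \cite[2]{BR}.

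For (i)$\Rightarrow$(ii): assuming $a^nM=a^{n+1}M$, I would take an arbitrary $u\in M$ and observe that $a^nu\in a^nM=a^{n+1}M$, so $a^nu=a^{n+1}w$ for some $w\in M$. Then $a^n(u-aw)=0$, hence $u-aw\in(0:_MI^n)$, while $aw\in aM=IM$; therefore $u\in IM+\bigcup_{n\ge1}(0:_MI^n)$. As $u$ is arbitrary, (ii) follows. This implication is essentially immediate once $I$ is replaced by a generator.

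For (ii)$\Rightarrow$(i), which is where finite generation enters, I would write $M=Rx_1+\cdots+Rx_t$. By (ii) each generator decomposes as $x_i=ay_i+z_i$ with $y_i\in M$ and $a^{n_i}z_i=0$ for some $n_i\in\nat$. Putting $n=\max\{n_1,\dots,n_t\}$ annihilates all the $z_i$ at once, so $a^nx_i=a^{n+1}y_i\in a^{n+1}M$ for every $i$; since $a^nM=\sum_i R\,a^nx_i$, this gives $a^nM\subseteq a^{n+1}M$, and the reverse inclusion $a^{n+1}M=a^n(aM)\subseteq a^nM$ is trivial, so $I^nM=I^{n+1}M$. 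I do not anticipate a genuine obstacle; the only point requiring a little care is this last step—selecting one exponent $n$ that simultaneously kills the ``torsion parts'' $z_1,\dots,z_t$ of a finite generating set, which then forces the required equality $a^nM=a^{n+1}M$.
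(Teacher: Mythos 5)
Your proof is correct and follows essentially the same route as the paper's: both directions reduce to a generator $a$ of $I$ using the standing principal-ideal-ring hypothesis, with (i)$\Rightarrow$(ii) obtained by writing $a^nu=a^{n+1}w$ and splitting $u=(u-aw)+aw$, and (ii)$\Rightarrow$(i) by decomposing a finite generating set and taking the maximum of the annihilating exponents. No substantive difference from the paper's argument.
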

	\begin{proof}
		(i)$\Rightarrow$(ii) Suppose that $I^nM=I^{n+1}M$ for some $n\in\nat$. Since $R$ is a principal ideal ring there exists an $r\in{R}$ such that $I=(r)$. Then $r^nM=r^{n+1}M$. That means for each $m\in{M}$ there exists a $m'\in{M}$ such that $r^nm=r^{n+1}m'$. Then $m-rm'\in(0:_Mr^n)=(0:_MI^n)$. Hence we have $M\subseteq{IM+\bigcup_{n=1}^\infty(0:_MI^n)}$. The result follows.
		
		(ii)$\Rightarrow$(i) Assume that $IM+\bigcup_{n=1}^\infty(0:_MI^n)=M$ holds. Since $I$ is a principal ideal, $I=(x)$ for some $x\in R$. Then we have $xM+\bigcup_{n=1}^\infty(0:_Mx^n)=M$. Let $\{ a_1,a_2,...,a_n\}$ be a generator set for $M$. Then, by assumption, we have $$a_i\in xM+\bigcup_{n=1}^\infty(0:_Mx^n)$$ for each $i\in \{1,2,...,n\}$. Then, for each $i\in \{1,2,...,n\}$,  there exists $m_i\in M$ and $n_i\in(0:x^{k_i})$, $k_i\in \mathbb{N}$ such that $a_i=xm_i+n_i$. Set $k=max\{k_1,...,k_n\}$. Then $$x^ka_i=x^{k+1}m_i+x^kn_i=x^{k+1}m_i\in x^{k+1}M.$$
		Hence we have $x^kM\subseteq x^{k+1}M$. The other inclusion is always true. Therefore we obtain $x^kM=x^{k+1}M$, that is $I^kM=I^{k+1}M$.
	\end{proof}
	\begin{theorem}\label{E:zerodimlemma}
		A finitely generated faithful multiplication $R$-module $M$ is zero-dimensional if and only if one of the conditions of Lemma \ref{E:pir}  are satisfied for every $m\in{M}$.
	\end{theorem}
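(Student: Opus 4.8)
The plan is to translate the statement into the language of ideals of $R$ by means of the standard dictionary available for a finitely generated faithful multiplication module. Since $M$ is finitely generated, faithful and multiplication, the assignment $I\mapsto IM$ is an inclusion preserving bijection from the set of ideals of $R$ onto the set of submodules of $M$ (so in particular $IM=JM$ forces $I=J$, and $IM=M$ forces $I=R$), and under it prime submodules correspond to prime ideals of $R$ and maximal submodules to maximal ideals of $R$ \cite{BS}; hence $M$ is zero-dimensional if and only if $R$ is zero-dimensional. We also use throughout that $R$ is a principal ideal ring, so every ideal, and in particular every $I$ with $Rm=IM$, is of the form $(r)$. By Lemma \ref{E:pir} the two conditions stated there are equivalent, so it suffices to prove that $M$ is zero-dimensional if and only if the first condition of Lemma \ref{E:pir} holds for every $m\in M$.

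For the forward implication, suppose $M$ is zero-dimensional. Then $R$ is zero-dimensional, and being also Noetherian (a principal ideal ring) it is Artinian; in particular $R$ satisfies the descending chain condition on ideals. Let $m\in M$ and write $Rm=IM$ with $I=(r)$. The chain $(r)\supseteq(r^2)\supseteq(r^3)\supseteq\cdots$ must stabilise, so $(r^n)=(r^{n+1})$ for some $n\in\nat$, i.e.\ $I^n=I^{n+1}$, whence $I^nM=I^{n+1}M$. Thus the first condition of Lemma \ref{E:pir}, and hence also the second one, holds for every $m\in M$.

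For the converse, assume the first condition of Lemma \ref{E:pir} holds for every $m\in M$, and suppose, for contradiction, that $M$ — equivalently $R$ — is not zero-dimensional. Then there is a prime ideal $Q$ of $R$ properly contained in a maximal ideal $\mathfrak m$. Localise at $\mathfrak m$ and put $A=R_{\mathfrak m}$, a local principal ideal ring with maximal ideal $\mathfrak n=\mathfrak m A=pA$ for some $p$. Since $Q_{\mathfrak m}\subsetneq\mathfrak n$ we have $p\notin Q_{\mathfrak m}$, so $Q_{\mathfrak m}=pQ_{\mathfrak m}$ and hence $Q_{\mathfrak m}=0$ by Nakayama's lemma; thus $A$ is a domain and $p$ is a nonzero nonunit, so $(p^n)\supsetneq(p^{n+1})$ for every $n$. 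On the other hand, $M_{\mathfrak m}$ is a finitely generated faithful multiplication module over the local ring $A$, hence cyclic, hence isomorphic to $A$. Choose $m\in\mathfrak m M$ whose image generates the cyclic $A$-module $(\mathfrak m M)_{\mathfrak m}=pM_{\mathfrak m}$, and write $Rm=IM$; localising gives $I_{\mathfrak m}M_{\mathfrak m}=pM_{\mathfrak m}=(pA)M_{\mathfrak m}$, and since $M_{\mathfrak m}\cong A$ this forces $I_{\mathfrak m}=pA$. Applying the hypothesis to this $m$, we obtain $I^nM=I^{n+1}M$ for some $n$; localising at $\mathfrak m$ and using $M_{\mathfrak m}\cong A$ yields $(p^n)A=(p^{n+1})A$, a contradiction. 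Therefore $M$ is zero-dimensional.

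The forward implication is routine; the real work is in the converse. The delicate point there is that the condition of Lemma \ref{E:pir} is only assumed for \emph{elements of $M$}, whereas to rule out a chain of primes one would like to test an arbitrary element (or ideal) of $R$; the localisation step is precisely what bridges this gap, since after localising at a maximal ideal $M$ becomes a free module of rank one and the relevant ideal data of $R_{\mathfrak m}$ is realised by a genuine element of $M$. One also needs the elementary structural observation that in a local principal ideal ring the presence of a non-maximal prime forces the ring to be a domain; everything else is bookkeeping with the submodule–ideal correspondence of \cite{BS}.
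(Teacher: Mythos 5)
Your proof is correct, but it follows a genuinely different route from the paper's. The paper never leaves the module: it works with condition (ii) of Lemma \ref{E:pir} throughout, showing that if $IM+\bigcup_{n}(0:_MI^n)$ is proper it sits inside a prime $Q'$, and then manufacturing a strictly smaller prime by taking an ideal maximal among those disjoint from the multiplicative set $S=\{a^nr: n\in\nat,\ r\in R\setminus(Q':M)\}$; conversely, given a chain $P\subset Q$ of prime submodules it picks $m\in Q\setminus P$ and verifies by hand that $IM+\bigcup_n(0:_MI^n)\subseteq Q$. You instead push everything down to $R$ via the ideal--submodule bijection for finitely generated faithful multiplication modules, work with condition (i), and invoke heavier standard machinery: Akizuki's theorem (Noetherian of dimension zero implies Artinian, hence DCC on the chain $(r^n)$) for the forward direction, and localization at a maximal ideal $\mathfrak m$ above a non-maximal prime for the converse, where $R_{\mathfrak m}$ is forced to be a local principal ideal domain so that the powers $(p^n)$ never stabilize. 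Your version makes transparent that the theorem is really a statement about the principal ideal ring $R$, and it exploits the PIR hypothesis structurally; the paper's version is more elementary and self-contained (no localization of modules, no Artinian ring theory) and has the virtue of explicitly exhibiting the chain of primes that witnesses positive dimension. Two points in your write-up deserve an explicit word, though neither is a real gap: Nakayama applied to $Q_{\mathfrak m}=pQ_{\mathfrak m}$ needs $Q_{\mathfrak m}$ finitely generated (immediate, since a localization of a principal ideal ring is again a principal ideal ring), and the existence of a single element $m\in\mathfrak m M$ whose image generates $(\mathfrak m M)_{\mathfrak m}$ needs a one-line Nakayama argument (some $x/1$ with $x\in\mathfrak m M$ lies outside $\mathfrak n\cdot(\mathfrak m M)_{\mathfrak m}$), or can be sidestepped by taking $m=ag$ with $\mathfrak m=(a)$ and $g$ any element of $M$ generating $M_{\mathfrak m}$.
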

	\begin{proof}
		Suppose that the condition (ii) of Lemma \ref{E:pir} is not satisfied for some $m\in{M}$. Then $IM+\bigcup_{n=1}^\infty(0:_MI^n)$ is a proper submodule of $M$, and hence it is contained in a prime submodule $Q'$ of $M$. Then the ideal $Q=(Q':M)$ is a prime ideal of $R$. Since $R$ is a principal ideal ring, we have $I=(a)$ for some $a\in{R}$. Set $S:=\{a^nr : n\in{\nat}, r\in{R\backslash{Q}}\}$. Assume $0\in{S}$. Then there exists an $r\in{R\backslash{Q}}$ such that $a^nr=0$. Let $x\in{M}$. Then we have $rx\in(0:_Ma^n)=(0:_MI^n)\subseteq{Q'}$. As $r\not\in{Q=(Q':M)}$ and $Q'$ is prime, we conclude that $x\in{Q'}$, and hence $M\subseteq{Q'}$, a contradiction. Hence $0\not\in{S}$. Then there exists a prime ideal $P$ of $R$ such that $P\cap{S}=\emptyset$. Since $R\backslash{Q}\subseteq{S}$ we have $P\subseteq{Q}$. Besides $aM\subseteq{Q'}$, and hence $a\in{Q}$. However, $a\not\in{P}$ since $a\in{S}$. Therefore $P$ is a proper ideal of $Q$. Then, by \cite[3.1]{BS}, $PM$ is a proper submodule of $QM$. Thus $M$ is not zero-dimensional.
		
		Conversely assume that $M$ is not zero-dimensional. Then there exist prime submodules $P$ and $Q$ of $M$ such that $P\subset{Q}$. Let $m\in{Q\backslash{P}}$. Then $Rm=IM$ for some ideal $I$ of $R$. Suppose $\bigcup_{n=1}^\infty(0:_MI^n)\not\subseteq{P}$. Then there exists an $x\in{M}$ such that $I^nx=0$ for some $n\in{\nat}$ and $x\not\in{P}$. Since $I=(b)$ for some $b\in{R}$, we have $b^nx=0\in{P}$. As $P$ is prime, we have $b^n\in(P:M)$. Then $b\in(P:M)$. Hence we obtain $m\in{Rm}=bM\subseteq{P}$, a contradiction. Therefore $\bigcup_{n=1}^\infty(0:_MI^n)\subseteq{P}\subset{Q}$. Besides $IM=Rm\subseteq{Q}$. Hence $IM+\bigcup_{n=1}^\infty(0:_MI^n)\subseteq{Q}\neq{M}$. This is the contrapositive of the condition (ii) of Lemma \ref{E:pir}.
	\end{proof}
	\begin{theorem}\label{E:zero-dim}
		Let $M$ be a finitely generated faithful multiplication $R$-module. The following conditions are equivalent:
		\begin{enumerate}[(i)]
			\item $M$ is zero-dimensional.
			\item Property (*) holds for the family of all submodules of $M$.
			\item Property (*) holds for the family of all primary submodules of $M$.
		\end{enumerate}
	\end{theorem}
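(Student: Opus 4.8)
The plan is to establish the cycle of implications $(i)\Rightarrow(ii)\Rightarrow(iii)\Rightarrow(i)$. The middle step $(ii)\Rightarrow(iii)$ is immediate: property (*) passes to subfamilies, since the integer $n$ furnished for a given $x\in M$ by a family also serves every subfamily, and every primary submodule is in particular a submodule.

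For $(i)\Rightarrow(ii)$, fix $x\in M$ and write $Rx=IM$ for some ideal $I$ of $R$, which is possible because $M$ is a multiplication module. Since $M$ is zero-dimensional, Theorem \ref{E:zerodimlemma} together with Lemma \ref{E:pir} produces an $n\in\nat$ with $I^nM=I^{n+1}M$; multiplying repeatedly by $I$ gives $I^nM=I^jM$ for every $j\ge n$, while $IM\supseteq I^2M\supseteq\cdots$ is a decreasing chain throughout. Recalling that $x^k=I^kM$ for all $k$, any submodule $N$ with $x\in\rad(N)$ satisfies $x^k\subseteq N$ for some $k$, and then $x^n\subseteq N$ as well: if $k\ge n$ this holds because $x^n=x^k$, and if $k<n$ because $x^n=I^nM\subseteq I^kM=x^k$. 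Thus the single integer $n$ witnesses property (*) for $x$ against the family of all submodules of $M$, and since $x$ was arbitrary, $(ii)$ holds.

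For $(iii)\Rightarrow(i)$ I would argue by contraposition. Assume $M$ is not zero-dimensional. By Theorem \ref{E:zerodimlemma} and Lemma \ref{E:pir} there is an $m\in M$ with $Rm=IM$, $I=(a)$, such that $I^nM\neq I^{n+1}M$ for every $n$; moreover, running the argument in the proof of Theorem \ref{E:zerodimlemma} for this $m$ yields prime ideals $P\subsetneq Q$ of $R$ with $a\in Q\setminus P$. Since $R$ is a principal ideal ring, $\dim R\le 1$, so $Q$ is maximal; hence every power $Q^n$ is $Q$-primary, and because $M$ is finitely generated, faithful, and multiplication, each $Q^nM$ is a primary submodule with $\rad(Q^nM)=QM$. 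As $a\in Q$, we get $m\in\rad(Q^nM)$ for all $n\ge 1$. On the other hand, if some fixed $k$ satisfied $m^k=(a^k)M\subseteq Q^nM$ for all $n$, then $(a^k)\subseteq Q^n$ for all $n$ by \cite[3.1]{BS}, so $a^k\in\bigcap_n Q^n$; the Krull intersection theorem (applicable since $R$ is Noetherian) then yields $a^k(1-q)=0$ for some $q\in Q$, and since $a\notin P$ and $1-q\notin Q\supseteq P$ this contradicts the primeness of $P$. Hence the family $\{Q^nM\}_{n\ge1}$ of primary submodules admits no uniform exponent for $m$, so it violates property (*), which proves $(iii)\Rightarrow(i)$.

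The delicate point is isolating the correct family of primary submodules in the last implication. The obvious candidate $\{I^nM\}_{n\ge1}$ does not work, because a power of a non-prime-power principal ideal need not be primary; passing instead to the powers of a maximal ideal $Q$ lying above $I$ repairs this, but one must then check that $m$ cannot be absorbed by all the $Q^nM$ at once, and this is exactly where the strict containment $P\subsetneq Q$ — equivalently, the failure of condition (ii) of Lemma \ref{E:pir} — is used, via Krull's theorem (or, equivalently, by localizing at $Q$: the localization is a discrete valuation ring, and the finiteness of the valuation of $a$ gives the same conclusion). This is also the module-theoretic repackaging of the zero-dimensionality criterion from \cite{BR}.
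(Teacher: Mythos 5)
Your argument is correct, and while (i)$\Rightarrow$(ii) and (ii)$\Rightarrow$(iii) coincide with the paper's proof (same stabilization index $n$ with $I^nM=I^{n+1}M$, same two-case comparison of $n$ with $k$), your implication (iii)$\Rightarrow$(i) takes a genuinely different route. Both proofs start from the same data extracted from the proof of Theorem \ref{E:zerodimlemma}: an element $m$ with $Rm=IM=aM$ and prime ideals $P\subsetneq Q$ with $a\in Q\setminus P$. The paper then forms the saturations $Q_n=\{y\in M: sy\in I^nM \text{ for some } s\notin (P':M)\}$ at a minimal prime $P'$ over $IM$, proves each $Q_n$ is $(P':M)$-primary via localization, and derives the contradiction by cancelling $a^k$ from $sa^km=a^{k+1}m'$ --- a step that invokes torsion-freeness of $M$, which is not obviously available for a faithful multiplication module. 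You instead climb to the top of the chain: since $R/P$ is a PID, $Q$ is maximal, so the ordinary powers $Q^n$ are $Q$-primary and $Q^nM$ is a primary submodule with $\rad(Q^nM)=QM\ni m$ (both facts following from the cancellation property \cite[3.1]{BS} for finitely generated faithful multiplication modules); the failure of a uniform exponent then comes from $a^k\in\bigcap_n Q^n$ and the Krull intersection theorem, with $a\notin P$ and $1-q\notin P$ giving the contradiction. Your version buys a cleaner endgame --- no localization of the module and no torsion-freeness hypothesis, at the cost of the (standard) facts that principal ideal rings are Noetherian of dimension at most one. Both arguments correctly reduce (iii) to a specific countable subfamily of primary submodules, which suffices since property (*) passes to subfamilies.
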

	\begin{proof}
		(i)$\Rightarrow$(ii) Suppose that $M$ is zero-dimensional. Then by \ref{E:zerodimlemma}, for each $m\in{M}$ there exists an $n\in{\nat}$ such that $I^nM=I^{n+1}M$, where $Rm=IM$. Observe that the equality $I^{n+t}M=I^nM$ holds for all
		$t\in{\nat}$. Let $N$ be  a submodule of $M$. If $x\in{\rad{N}}$, then  there exists a $k\in{\nat}$ such that $J^kM\subseteq{N}$ where $Rx=JM$. If $n>k$ then $J^nM=J^{n-k}(J^kM)\subseteq{J^{n-k}N}\subseteq{N}$. If $n\le{k}$ then $J^nM=J^kM\subseteq{N}$. In both cases we have $x^n=J^nM\subseteq{N}$. Therefore the property (*) holds for the family of all submodules of $M$.
		
		(ii)$\Rightarrow$(iii) Trivial.
		
		(iii)$\Rightarrow$(i) Suppose that the property (*) holds for the family of all primary submodules of $M$ and $M$ is not zer-dimensional. Assume that $P$ is a prime submodule of $M$ that is not maximal. Let $x\in M$. Since $M$ is a multiplication module there is an ideal $I$ of $R$ such that $Rx=IM$. Let $P$ be a minimal prime submodule of $IM$. For each $n\in\mathbb{N}$, define
		$$Q_n=\{m\in M : ~sm\in I^nM \text{ for some } s\in R\backslash (P:M)\}.$$
		Set $Q=(P:M)$. Then, by \cite[6]{L}, $M_Q$ is a local module. Hence $P_Q$ is the unique maximal submodule of $M_Q$. Observe that $P$ is also a minimal prime submodule of $I^nM$. Then we have rad$(I^nM)_Q=P_Q$.  Thus,
		$$\text{rad}Q_n=\text{rad}((I^nM)_Q\cap R)=(\text{rad}(I^nM)_Q)\cap R=P_Q\cap R=P,$$
		and hence we obtain
		$$(P:M)M=P=\text{rad}Q_n=\text{rad}((Q_n:M)M)=\sqrt{(Q_n:M)}M.$$
		Since $M$ is a finitely generated faithful multiplication module, by \cite[3.1]{BS}, we conclude that $\sqrt{(Q_n:M)}=(P:M)$. Now, let $r\in R$, $m\in M$ such that $rm\in Q_n$. Then there exists $s\in R\backslash(P:M)$ such that $srm\in I^nM$.  If $r\not\in\sqrt{(Q_n:M)}$, then $sr\in R\backslash(P:M)$. Then since $srm\in I^nM$ we obtain $m\in Q_n$. Therefore $Q_n$ is $(P:M)$-primary.
		Then $\{Q_n\}_n\in \mathbb{N}$, is a family of primary submodules of $M$. Observe that $x\in P=\bigcap_{n\in\mathbb{N}}\text{rad}(Q_n)$. We are to show that $x\not\in\text{rad}(\bigcap_{n\in\mathbb{N}}Q_n)$. Assume, on the contrary, that $x\in\text{rad}(\bigcap_{n\in\mathbb{N}}Q_n)$. Then for some $k\in\mathbb{N}$ we have $x^k\subseteq \bigcap_{n\in\mathbb{N}}Q_n$. In particular, $x^k\subseteq Q_{k+1}$. Note that $I$ is a principal ideal, hence there exists $a\in R$ such that $I=(a)$. Since $P\neq M$ there exists $m\in M\backslash P$ and $s\in R\backslash(P:M)$ such that $sa^km=a^{k+1}m'$ for some $m'\in M$. Since $M$ is torsion-free, we conclude that $sm=am'\in IM\subseteq P$ and this contradicting our choice $m\in M\backslash P$. Hence we must have $x\not\in\text{rad}(\bigcap_{n\in\mathbb{N}}Q_n)$. Therefore we obtain a family of primary submodules $Q_n$, $n\in M$, of $M$ for which the property (*) does not hold.
	\end{proof}


\begin{thebibliography}{0}
		
		
		
		
		\bibitem {BS} Z. Abd El-Bast and P. P. Smith, \emph{Multiplication modules}. Comm. Algebra {\bf 16}(1998), 755--779.
		\bibitem {RA} R. Ameri, \emph{On the prime submodules of multiplication modules}. Int. J. Math. Math. Sci. {\bf 27}(2003), 1715--1724.
		\bibitem{FA} W. F. Anderson nd K. R. Fuller, \emph{Rings and Categories of Modules}, Springer Science \& Business Media, 2012.
		\bibitem {A} M. Arapovi\'{c}, \emph{On the embedding of a commutative ring into a 0-dimensional ring}. Glas. Mat. {\bf 18}(1983), 53--59.
		\bibitem {B} A. Barnard, \emph{Multiplication modules}. J. Algebra {\bf 71}(1981), 174--178.
		\bibitem {BR}J. Brewer and F. Richman, \emph{Subrings of zero-dimensional rings}. In: J. W. Brewer, S. Glaz, W. Heinzer, and B. Olberding (Eds.) \emph{Multiplicative Ideal Theory in Commutative Algebra}, pp. 73--88. Springer US, 2006. 
		\bibitem {VE} V. Erdoğdu, \emph{Distributive modules}. Canad. Math. Bull. {\bf 30}(1987), 248--254.
		\bibitem{JOT} C. Jayaram, K. H. Oral and Ü. Tekir, \emph{Strongly 0-dimensional rings}.  Comm. Algebra {\bf 41}(2013), 2026--2032.
		\bibitem{L} C. P. Lu, \emph{Spectra of modules}. Comm. Algebra {\bf 23}(1995), 3741--3752. 
		\bibitem {OAT} K. H. Oral, N. A. Özkirişçi and Ü. Tekir, \emph{Strongly 0-dimensional modules} Canad. Math. Bull. {\bf 57}(2013), 159--165.
		\bibitem{AOT} N. A. Özkirişçi, K. H. Oral and Ü. Tekir, \emph{Coprimely structured rings}. Turkish J. Math.{\bf 40}(2015), 719--727.
		\bibitem{S} W. Stephenson, \emph{Modules whose lattice of submodules is distributive}. Proc. Lond. Math. Soc. {\bf 28}(1974), 291--310.
	\end{thebibliography}
\end{document}